\definecolor{webred}{rgb}{0.75,0,0}
\definecolor{webgreen}{rgb}{0,0.75,0}
\renewcommand{\leq}{\leqslant}
\renewcommand{\geq}{\geqslant}
\newtheorem{theorem}{Theorem}[section]
\newtheorem{lemma}[theorem]{Lemma}
\newtheorem{notation}[theorem]{Notation}
\newtheorem{assumption}[theorem]{Assumption}
\newtheorem{proposition}[theorem]{Proposition}
\newtheorem{corollary}[theorem]{Corollary}
\newtheorem{definition}[theorem]{Definition}
\def\blacksquare{
\thinspace\nobreak \vrule width 5pt height 5pt depth 0pt}
\newtheorem{remark}[theorem]{Remark}
\newenvironment{rem}{\begin{remark} \rm }{\hfill \end{remark}}
\newenvironment{proof}{\begin{trivlist}
      \item[]\hspace{0cm}{\bf Proof:}
      \hspace{0cm}}{\hfill $\blacksquare$
      \end{trivlist}}
\newenvironment{proofof}[1]{\begin{trivlist}
      \item[]\hspace{0cm}{\bf Proof of #1:}
      \hspace{0cm}}{\hfill $\blacksquare$
      \end{trivlist}}
\def\R{\mathbb R}
\def\Z{\mathbb Z}
\def\S1{\mathbb S^{1}}
\def\Re{{\mathrm {Re}\,}} 
\def\sL{{\rm L}}
\def\sW{{\rm W}}
\def\sH{{\rm H}}
\def\supp{\mathsf{supp}\;}
\def\Sp{\mathsf{sp}}
\def\dx{\,{\rm d}}
\def\re{\mathrm{e}}
\def\dist{\mathsf{dist}}
\def\ri{{\mathsf r}}
\def\le{\ell}
\def\up{\mathsf u}
\def\dow{\mathsf d}
\def\sfA{\mathsf A}
\def\sfS{\mathsf S}
\def\Phir{\Phi_{\ri}}
\def\Phire{\Phi_{\ri,\varepsilon}}
\def\tPhirN{\widetilde\Phi_{\ri,N,h}}
\def\hPhirN{\widehat\Phi_{\ri,N,h}}
\def\Ws{\mathsf W}
\def\Lh{{\mathfrak L}_{h}}
\def\Lr{{\mathfrak L}_{h,\ri}}
\def\Ll{{\mathfrak L}_{h,\le}}
\def\La{{\mathfrak L}_{h,\alpha}}
\def\tLr{{{\mathcal L}}_{h,\ri}}
\def\tLl{{{\mathcal L}}_{h,\le}}
\def\chir{\chi_{\ri}}
\def\chil{\chi_{\le}}
\def\chia{\chi_{\alpha}}
\def\chib{\chi_{\beta}}
\def\cB{{\mathcal B}}
\def\Bri#1{\cB_{\ri}(#1)}
\def\Ble#1{\cB_{\le}(#1)}
\def\Ier{\Bri{\pi-\eta}}
\def\Iel{\Ble{\pi-\eta}}
\def\Iea{\cB_{\alpha}(\pi-\eta)}
\def\I2er{\Bri{\pi-2\eta}}
\def\Ih{I_h}
\def\lar{\lambda(h)} 
\def\BKWr{\psi_{h,\ri}}
\def\BKWl{\psi_{h,\le}}
\def\tphir{\phi_{h,\ri}}
\def\tphil{\phi_{h,\le}}
\def\phir{\varphi_{h,\ri}}
\def\phil{\varphi_{h,\le}}
\def\phia{\varphi_{h,\alpha}}
\def\fr{f_{h,\ri}}
\def\fl{f_{h,\le}}
\def\fa{f_{h,\alpha}}
\def\fb{f_{h,\beta}}
\def\resta{r_{h,\alpha}}
\def\gr{g_{h,\ri}}
\def\gl{g_{h,\le}}
\def\ga{g_{h,\alpha}}
\def\gb{g_{h,\beta}}
\def\gna{{\mathsf g}_{h,\alpha}}
\def\gnb{{\mathsf g}_{h,\beta}}
\begin{document}

\title{Semiclassical tunneling and magnetic flux effects on the circle}
\author{V. Bonnaillie-No\"el\footnote{D\'epartement de Math\'ematiques et Applications (DMA - UMR 8553), PSL, CNRS, ENS Paris, 45 rue d'Ulm, F-75230 Paris cedex 05, France
\texttt{bonnaillie@math.cnrs.fr}},
F. H\'erau\footnote{LMJL - UMR6629, Universit\'e de Nantes, 2 rue de la Houssini\`ere, BP 92208, F-44322 Nantes cedex 3, France, \texttt{frederic.herau@univ-nantes.fr}}
and N. Raymond\footnote{IRMAR - UMR8625, Universit\'e Rennes 1, CNRS, Campus de Beaulieu, F-35042 Rennes cedex, France
\texttt{nicolas.raymond@univ-rennes1.fr}}}
\date{\today}
\maketitle

\begin{abstract}
This paper is devoted to semiclassical tunneling estimates induced on the circle by a double well electric potential in the case when a magnetic field is added. When the two electric wells are connected by two geodesics for the Agmon distance, we highlight an oscillating factor (related to the circulation of the magnetic field) in the splitting estimate of the first two eigenvalues.
\end{abstract}

\paragraph{Keywords.} WKB expansion, magnetic Laplacian, Agmon estimates, tunnel effect.
\paragraph{MSC classification.} 35P15, 35J10, 81Q10.

\section{Introduction and motivations}

\subsection{Motivation}
This paper is devoted to the spectral analysis of the self-adjoint realization of the electro-magnetic Laplacian $(hD_{s}+a(s))^2+V(s)$ on $\sL^2(\S1)$ where the vector potential $a$ and the electric potential $V$ are smooth functions on the circle $\S1$ and where we used the standard notation $D=-i\partial$. In particular we are interested in estimating the spectral gap, in the semiclassical limit, between the first two eigenvalues when the electric potential admits a double symmetric well.
\begin{assumption}\label{V}
In the parametrization $\R\ni s\mapsto \re^{is}\in\S1$, the function $V$ admits exactly two non degenerate minima at $0$ and $\pi$ with $V(0)=V(\pi)=0$ and satisfies $V(\pi-s)=V(s)$.
\end{assumption}

It is well-known that, in dimension one, there is no magnetic field in the sense that the exterior derivative of the $1$-form $a(s) \dx s$ is zero. Nevertheless, since $\S1$ is not simply connected, we cannot gauge out $a$ thanks to an appropriate unitary transform: The circulation of $a$ will remain. This can be explained as follows. Let us define $\varphi(s)=\int_{0}^s \left(a(\sigma)-\xi_{0}\right) \dx \sigma$ with $\xi_{0}=\int_{-\pi}^\pi a(\sigma) \dx \sigma$ so that $\varphi$ is well-defined and smooth on $\S1$. Then let us consider the conjugate operator
\begin{align*}
\Lh&=\re^{i\varphi/h}\left[(hD_{s}+a(s))^2+V(s)\right] \re^{-i\varphi/h}\\
&=(hD_{s}+a(s)-\varphi'(s))^2+V(s)\\
&=(hD_{s}+\xi_{0})^2+V(s).
\end{align*}
The aim of this paper is to investigate the effect of the circulation $\xi_{0}$ of $a$ on the semiclassical spectral analysis.

\subsection{Results}
The analysis of this paper gives an asymptotic result of the splitting between the first two eigenvalues $\lambda_1(h)$ and $ \lambda_2(h)$ of $\Lh$, when the potential $V$ has some symmetries.
\begin{theorem}\label{th.gap}
Let $\kappa$ be the geometric constant defined by
\begin{equation} \label{lambda0}
\kappa = \sqrt{\frac{V''(0)}{2}}.
\end{equation}
Then, as soon as $h$ is small enough, there are only two eigenvalues of $\Lh$ in the interval $\Ih = (-\infty, 2\kappa h)$ and they both satisfy
$$\mbox{ for }j=1,2,\qquad\lambda_j(h) = \kappa h + o(h)\quad\mbox{ as }h\to0.$$
Let us define the (positive) Agmon distances
$$
\sfS_{\up} = \int_{[0,\pi]} \sqrt{V(\sigma)} \dx \sigma, \qquad
\sfS_{\dow} = \int_{[0, -\pi]} \sqrt{V(\sigma)} \dx \sigma, \qquad\mbox{ and}\qquad
\sfS = \min \{ \sfS_{\up}, \sfS_{\dow} \},
$$
and the two constants
$$
\sfA_{\up} = \exp\left( - \int_{[0,\frac\pi2]} \frac{\partial_\sigma {\sqrt{V}} - \kappa}{\sqrt{V}} \dx\sigma\right), \qquad
\sfA_{\dow} = \exp\left( \int_{[-\frac\pi2, 0]} \frac{\partial_\sigma {\sqrt{V}} + \kappa}{\sqrt{V}} \dx\sigma\right).
$$
Then we have the spectral gap estimate
\begin{equation}\label{eq.tun}
\lambda_2(h) - \lambda_1(h) = 2| w_{0}(h)| + h^{3/2} {\cal O}(\re^{-\sfS/h}) ,
\end{equation}
with
\begin{equation}\label{eq.woh}
w_{0}(h)=2h^{1/2}\sqrt{ \frac{\kappa}{\pi}}
\left( \sfA_{\up} \sqrt{V\left(\frac\pi2\right)} \re^{\frac{i\xi_{0} \pi-\sfS_{\up}}h}
+\sfA_{\dow} \sqrt{V\left(-\frac{\pi}2\right)} \re^{\frac{-i\xi_{0} \pi-\sfS_{\dow}}h} \right).
\end{equation}
\end{theorem}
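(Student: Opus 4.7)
The plan is to follow the Helffer--Sj\"ostrand tunneling strategy adapted to the circle with magnetic flux; the new feature is the presence of \emph{two} geodesic channels linking the wells, whose contributions interfere through the magnetic circulation.

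\textbf{Step 1: quasimodes and spectral description.} I would first introduce Dirichlet realisations $\Lr$ and $\Ll$ of $\Lh$ on arcs $\Ier$ and $\Iel$ centered at the wells $0$ and $\pi$. On each simply connected arc the circulation $\xi_0$ can be gauged out locally, reducing $\Lh$ to the scalar operator $h^2D_s^2 + V(s)$. One then constructs WKB quasimodes $\tphir,\tphil$ of the form $a(s,h)\re^{-\Phi(s)/h}$, with $\Phi$ solving the eikonal equation $(\Phi')^2=V$ and $a$ built from the harmonic-oscillator ground state at the well via the transport hierarchy; the resulting approximate eigenvalue is $\kappa h + O(h^\infty)$, and solving the transport equations explicitly on $[0,\pi/2]$ and $[-\pi/2,0]$ yields the prefactors $\sfA_\up$ and $\sfA_\dow$. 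Agmon estimates combined with a min--max comparison with $\Lr$ and $\Ll$ then show that $\Lh$ admits exactly two eigenvalues in $\Ih$, both equal to $\kappa h + o(h)$.

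\textbf{Step 2: interaction matrix.} I would truncate $\tphir, \tphil$ by cut-offs $\chir, \chil$ supported away from the opposite well to produce approximate eigenfunctions $\fr, \fl$ of $\Lh$, and denote by $\gr, \gl$ their orthogonal projections onto $\Sp(\Lh, \Ih)$. The two eigenvalues of $\Lh$ in $\Ih$ are then those of the matrix of $\Lh$ in the basis $(\gr, \gl)$. The symmetry $V(\pi-s) = V(s)$ makes the diagonal entries equal, so the splitting is
$$
\lambda_2(h) - \lambda_1(h) = 2|w_0(h)| + \mathcal{O}(h^{3/2}\re^{-\sfS/h}),
$$
with $w_0(h) = \langle \Lh \fr, \fl\rangle - \kappa h \langle \fr, \fl\rangle$ up to exponentially small corrections. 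Writing $\Lh \fr - \kappa h \fr = [\Lh, \chir]\tphir$ shows the commutator is supported on the narrow strips where the cut-offs meet, near $s = \pm \pi/2$, localising $w_0(h)$ into two boundary contributions.

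\textbf{Step 3: evaluation of $w_0(h)$.} Each boundary term is a Wronskian of WKB functions evaluated at the corresponding midpoint. Inserting the WKB expansions produces the amplitudes $\sfA_\up\sqrt{V(\pi/2)}\,\re^{-\sfS_\up/h}$ at $\pi/2$ and $\sfA_\dow\sqrt{V(-\pi/2)}\,\re^{-\sfS_\dow/h}$ at $-\pi/2$, each multiplied by the normalisation factor $2h^{1/2}\sqrt{\kappa/\pi}$ inherited from the Gaussian ground states at the wells. The decisive step is then the bookkeeping of magnetic phases: the local gauges used to remove $\xi_0$ near $0$ and near $\pi$ disagree on their overlaps, and their mismatch at $s=\pm\pi/2$ produces precisely the Aharonov--Bohm phases $\re^{\pm i\xi_0\pi/h}$ appearing in \eqref{eq.woh}. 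Summing the two contributions delivers the announced formula.

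I expect this last phase bookkeeping to be the main obstacle: one must track several simultaneous gauge choices through the whole construction while keeping every error term at the relative order $\re^{-\sfS/h}$, so that the two-channel interference pattern in $|w_0(h)|^2$ actually survives and is not drowned by the remainder. A secondary technical point is to show that the correction $\gr - \fr$ is exponentially small enough to be absorbed in $h^{3/2}\mathcal{O}(\re^{-\sfS/h})$, which requires Agmon-weighted estimates on $\Lh$ itself.
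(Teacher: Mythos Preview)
Your proposal is correct and follows essentially the same Helffer--Sj\"ostrand route as the paper: local gauge reduction to the electric operator, WKB construction and Agmon estimates at each well, construction of the $2\times 2$ interaction matrix from truncated quasimodes and their projections, and evaluation of the off-diagonal entry via a Wronskian.

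One point to correct: the commutator $[\Lh,\chir]$ (equivalently $[\Ll,\chil]$) is \emph{not} supported near the midpoints $\pm\pi/2$ as you write, but on the narrow annuli where $\chia'\neq 0$, which lie near the \emph{opposite} well. The midpoints enter for a different reason: after an integration by parts the interaction coefficient becomes $h^2\int \chil'\,\mathsf{Wronsk}\,\dx s$ with $\mathsf{Wronsk}=\tphil\tphir'-\tphil'\tphir$, and since $\tphir,\tphil$ both solve $(h^2D_s^2+V-\lar)u=0$ on each of the two arcs between the wells, this Wronskian is \emph{constant} on each arc. One may therefore evaluate it at any point, and $\pm\pi/2$ is the natural choice because the symmetry $\tphil=U\tphir$ gives $\mathsf{Wronsk}(\pm\pi/2)=2\tphir(\pm\pi/2)\tphir'(\pm\pi/2)$ explicitly. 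The Aharonov--Bohm phases $\re^{\pm i\xi_0\pi/h}$ arise, in the paper's presentation, from the fact that the $2\pi$-periodic extension of $\phil(s)=\re^{i\xi_0\pi/h}\re^{-i\xi_0 s/h}\tphil(s)$ carries the prefactor $\re^{i\xi_0\pi/h}$ on the upper arc and $\re^{-i\xi_0\pi/h}$ on the lower one; this is equivalent to your ``gauge mismatch'' picture.
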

\begin{rem}
The constants $\sfS_{\up}$ and $\sfS_{\dow}$ correspond to integrations in the upper and respectively lower part of the circle for the Agmon distance. Then two situations may occur:
\begin{enumerate}
\item If the two Agmon distances $\sfS_{\up}$ and $\sfS_{\dow}$ are different, only one term in the sum \eqref{eq.woh} defining $w_{0}(h)$ is predominent and $w_{0}(h)$ is not zero for $h$ small enough. In this case, there exists a unique geodesic linking the two wells,  corresponding either to the upper part of the circle, or to the lower part. Moreover, the circulation $\xi_{0}$ is not involved in the estimate of the tunneling effect: we get an estimate similar to what happens in the purely electric situation (see \cite{Har80, Rob87} and more generally \cite{Simon84, HelSj84, HelSj85}).
\item If $\sfS_{\up} = \sfS_{\dow}$, the situation is completely different: due to the circulation, the interaction term $w_{0}(h)$ can vanish for some parameters $h$ and the eigenvalues can be equal up to an error of order ${\cal O}(h^{3/2} \re^{-\sfS /h})$. This corresponds to a crossing (up to the forementionned error) of these first two eigenvalues. Note that this does not mean that the eigenvalues $\lambda_{1}(h)$ and $\lambda_{2}(h)$ effectively cross but the gap is in ${\cal O}(h^{3/2} \re^{-\sfS /h})$.
\end{enumerate}
\end{rem}
When the potential $V$ is even, we are in the second situation and we have
$$
\sfA_{\up} =\sfA_{\dow} = \sfA , \qquad \sfS_{\up} = \sfS_{\dow} = \sfS , \qquad V\left(-\frac\pi2\right) = V\left(\frac\pi2\right),
$$
and we immediately deduce the following splitting estimate.
\begin{theorem}\label{th.gapsym}
Assume that $V$ is even, then
$$
\lambda_2(h) - \lambda_1(h) = 8 h^{1/2} \sfA \sqrt{V\Big(\frac\pi2\Big)} \sqrt{\frac{\kappa}{\pi}} \left|\cos\left( \frac{\xi_{0} \pi}h\right) \right| \re^{-\sfS /h}+ h^{3/2} {\cal O}\left(\re^{-\sfS /h}\right).
$$
\end{theorem}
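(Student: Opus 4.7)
My plan is to derive Theorem \ref{th.gapsym} directly from Theorem \ref{th.gap} by exploiting the additional symmetries of $V$ to collapse the two summands in \eqref{eq.woh} into a single cosine factor. Since the hard semiclassical analysis (Agmon estimates, WKB construction, interaction matrix computation, etc.) is already encoded in Theorem \ref{th.gap}, no further tunneling estimates will be required; the statement is essentially an algebraic consequence together with Euler's formula.

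First, I would verify the three symmetry identities quoted just before the theorem statement. Evenness of $V$ combined with Assumption \ref{V} gives $V(-\pi/2) = V(\pi/2)$ immediately. For the Agmon distances, the change of variable $\sigma \mapsto -\sigma$ maps the interval $[-\pi,0]$ onto $[0,\pi]$ and leaves $\sqrt{V}$ invariant, so $\sfS_{\up} = \sfS_{\dow}=: \sfS$. For the prefactors, set $g(\sigma):=\sqrt{V(\sigma)}$; then $g$ is even, hence $g'$ is odd. Substituting $\sigma \mapsto -\sigma$ in the integral defining $\sfA_{\dow}$ turns $\int_{-\pi/2}^{0} \frac{g'(\sigma)+\kappa}{g(\sigma)}\,\dx\sigma$ into $\int_{0}^{\pi/2}\frac{-g'(\sigma)+\kappa}{g(\sigma)}\,\dx\sigma = -\int_{0}^{\pi/2}\frac{g'(\sigma)-\kappa}{g(\sigma)}\,\dx\sigma$, which shows $\sfA_{\dow} = \sfA_{\up}=: \sfA$.

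Next, I would substitute these equalities into \eqref{eq.woh}. The two terms in the parenthesis share the common factor $\sfA\sqrt{V(\pi/2)}\,\re^{-\sfS/h}$, so that the bracket reduces to $\sfA\sqrt{V(\pi/2)}\,\re^{-\sfS/h}\bigl(\re^{i\xi_{0}\pi/h}+\re^{-i\xi_{0}\pi/h}\bigr) = 2\sfA\sqrt{V(\pi/2)}\,\re^{-\sfS/h}\cos(\xi_{0}\pi/h)$ by Euler's formula. This yields $w_{0}(h) = 4 h^{1/2}\sqrt{\kappa/\pi}\,\sfA\sqrt{V(\pi/2)}\,\cos(\xi_{0}\pi/h)\,\re^{-\sfS/h}$. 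Plugging $2|w_{0}(h)|$ into the spectral gap estimate \eqref{eq.tun} of Theorem \ref{th.gap} produces the announced formula.

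Since each step is a routine change of variables or a trigonometric identity, I do not expect any genuine obstacle: the only point requiring some care is the bookkeeping of the minus signs in the change of variable defining $\sfA_{\dow}$, where the odd character of $g'$ and the reversal of orientation of the interval must combine correctly to match the formula for $\sfA_{\up}$.
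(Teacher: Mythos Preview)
Your proposal is correct and follows exactly the approach of the paper, which simply states that Theorem~\ref{th.gapsym} is a direct consequence of Theorem~\ref{th.gap} once the three symmetry identities $\sfA_{\up}=\sfA_{\dow}$, $\sfS_{\up}=\sfS_{\dow}$ and $V(-\pi/2)=V(\pi/2)$ are noted. You supply more detail than the paper on verifying these identities via the change of variable $\sigma\mapsto -\sigma$, but the argument is the same.
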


\subsubsection*{Organization of the paper and strategy of the proofs}
In order to prove Theorem \ref{th.gap}, we will follow the strategy developed by Helffer and Sj\"ostrand in \cite{HelSj84, HelSj85} (see also the lecture notes by Helffer \cite[Section 4]{Hel88}) for the pure electric case. Thanks to a change of gauge, the investigation of the present paper can be reduced to the electric case only locally and not globally due to the circulation $\xi_{0}$. In Section \ref{Sec.simplewell}, we recall the WKB approximations of the first eigenfunction in the simple well case. In Section \ref{Sec.doublewell}, we explain how we can construct a $2$ by $2$ Hermitian matrix (the so-called \enquote{interaction matrix}) from the eigenfunctions of each well, which describes the splitting of first two eigenvalues of $\Lh$. This strategy is well-known (see for instance \cite{DiSj99} for a short presentation and \cite{Har80} for a complete description of the main terms) and is given here for completeness. The aim of the present paper is to highlight its oscillatory consequences on the interaction term in the non zero circulation case. To authors' knowledge this strategy was never applied in this context and the understanding of this model might be a main step towards the estimate of the pure magnetic tunnel effect in higher dimension (see \cite{HelSj87} and our recent contribution \cite[Section 5.3]{BHR14}). Note here that the influence of the circulation on the first eigenvalue has also been analyzed in \cite[Theorem 7.2.2.1]{Hel88} when $V$ admits a unique and non degenerate minimum. This question was also tackled by Outassourt in \cite{Out87} in a periodic framework. Finally, in Section \ref{Sec.Comput}, we analyze the semiclassical behavior of the interaction matrix in terms of the WKB approximations.

\section{Simple well cases}\label{Sec.simplewell}
In this section we study simple well configurations. First, we consider the well $s=0$. In the last part, we explain how we can transfer what was done for the well $s=0$ to the well $s=\pi$ thanks to a unitary transform.

\subsection{Local reduction to the pure electric situation}
Let us introduce the Dirichlet realization attached to the well $s=0$. For any $\rho\in (0,\pi]$, we define
$$ \Bri{\rho}:=\cB(0,\rho)= (-\rho,\rho).$$
Given $\eta>0$, let us consider $\Lr$ the Dirichlet realization of $(hD_{s}+\xi_{0})^2+V(s)$ on the space $\sL^2(\Ier, \dx s)$. Since $\Ier$ is simply connected, we can perform a gauge transform so that the study of $\Lr$ is reduced to the one of the operator
\begin{equation}\label{eq.jauge}
\tLr = \re^{\frac{i\xi_{0}s}{h}}\Lr \re^{\frac{-i\xi_{0}s}{h}} = h^2 D_{s}^2+V(s),
\end{equation}
defined on $\mathsf{Dom}\left(\tLr\right)=\sH^2(\Ier)\cap \sH^1_{0}(\Ier)$. Let us denote by $\lar$ the ground state energy of $\tLr$ and $\tphir$ the positive and $\sL^2$-normalized eigenfunction of $\tLr$ associated with the lowest eigenvalue $\lar$. We have
$$\tLr \tphir = \left(h^2 D_{s}^2 + V\right) \tphir = \lar \tphir\qquad \mbox{ on }\Ier.$$
Then, by gauge tranform, the function defined on $\Ier$ by
\begin{equation}\label{def.phir}
\phir(s) = \re^{-i\frac{\xi_{0} s}h} \tphir(s),
\end{equation}
is a $\sL^2$-normalized eigenfunction of $\Lr$ associated with $\lar$. \\

In the next section, we recall some results about the WKB analysis of the operator $\tLr$. In Section \ref{simpleagmon} we recall Agmon estimates and in particular prove the exponential decay of eigenfunctions. In the following subsection, we establish uniform estimates of the difference between the eigenfunctions and the WKB quasimodes.

\subsection{WKB approximations in a simple well}
This section is devoted to recall the structure of the first WKB quasimode of $\tLr$.
\begin{lemma} \label{lem.QM}
The asymptotic WKB series for the first quasimode of $\tLr$ is given by
 \begin{equation}\label{Ansatz-WKB}
\BKWr=\chir \Psi_{h,\ri},\quad \mbox{ with }\quad \Psi_{h,\ri}(s)= h^{-1/4} \re^{-\frac{\Phir(s)}{h}}\ \sum_{j\geq0}h^j a_{j}(s),\quad \forall s\in\Bri{\pi}, 
\end{equation}
where
\begin{enumerate}[i)]
\item $\chir$ is a smooth cut-off function supported on $\Ier$ with $0\leq \chir\leq 1$ and $\chir = 1$ on $\I2er$,
\item $\Phir $ is the standard Agmon distance to the well at $s=0$:
\begin{equation}\label{Phi0}
\Phir(s)=\int_{[0,s]} \sqrt{V(\sigma)} \dx \sigma, \qquad\forall s\in \Bri{\pi},
\end{equation}
\item $a_{0}$ is a solution of the associated transport equation
\begin{equation} \label{transport}
\Phir'\,\partial_{s}a_{0}+\partial_{s}\left(\Phir'\,a_{0}\right) =\kappa a_{0},
\end{equation}
with $\kappa$ defined in \eqref{lambda0}. It can be given explicitly by
$$
a_0(s) =\left(\frac{\kappa}{\pi}\right)^{1/4} \exp\left(- \int_0^s \frac{\Phir''(\sigma) - \kappa}{ 2 \Phir'(\sigma)} \dx\sigma\right),\qquad \forall s\in\Bri\pi.
$$
\end{enumerate}
The function $\BKWr$ is a $\sL^2$-normalized WKB quasimode in the sense that
\begin{equation}\label{Lr-lar-ter}
\re^{\Phir/h}\left(\tLr-\mu_{\ri}(h)\right)\BKWr = {\cal O}(h^\infty) \qquad \mbox{ in } \sL^2(\I2er),
\end{equation}
where $\mu_{\ri}(h)$ is the first quasi-eigenvalue given by the asymptotic series
$$
\mu_{\ri}(h) =\kappa h +\sum_{j\geq 2}\mu_{\ri,j}h^j.
$$
Moreover, we have
\[
\partial_s\BKWr (s)= - h^{-5/4} \Phir'(s)
\re^{-\frac{\Phir(s)}{h}} a_{0}(s)(1+ {\cal O}(h)), \qquad\forall s \in \I2er.
\]
\end{lemma}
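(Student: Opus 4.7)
The plan is the standard WKB construction for a one-dimensional Schrödinger operator near a non-degenerate potential well. First I insert the ansatz $\Psi_{h,\ri}(s) = h^{-1/4}\re^{-\Phir(s)/h} A(s,h)$ with formal series $A(s,h) = \sum_{j\geq 0} h^j a_j(s)$ and $\mu_{\ri}(h) = \sum_{j\geq 1} \mu_{\ri,j} h^j$ into $(\tLr - \mu_{\ri}(h))\Psi_{h,\ri}$. Writing $\tLr = -h^2\partial_s^2 + V$ and computing directly,
\begin{equation*}
\re^{\Phir/h}(\tLr - \mu_{\ri}(h))\Psi_{h,\ri} = h^{-1/4}\bigl[(V - (\Phir')^2)A + h(2\Phir' A' + \Phir'' A) - h^2 A'' - \mu_{\ri}(h) A\bigr].
\end{equation*}
Matching each power of $h$ to zero then yields a cascade of equations for $\Phir$, the $a_j$, and the $\mu_{\ri,j}$.

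The order-$h^0$ equation is the eikonal $(\Phir')^2 = V$, which is solved on $\Bri{\pi}$ by the Agmon distance $\Phir(s) = \int_{[0,s]}\sqrt{V(\sigma)}\dx\sigma$; this is smooth because $V$ has a single non-degenerate zero at $s=0$. At order $h^1$ I obtain the leading transport equation $2\Phir' a_0' + \Phir'' a_0 = \mu_{\ri,1} a_0$, a first-order linear ODE with a regular singular point at $s=0$. Using the local behaviour $\Phir'(s)\sim \kappa s$ and $\Phir''(0)=\kappa$, a Frobenius analysis shows that a smooth solution on $\Bri{\pi}$ exists if and only if $\mu_{\ri,1} = \kappa$. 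Separation of variables then gives the closed form stated in the lemma, and the normalization $a_0(0) = (\kappa/\pi)^{1/4}$ is fixed by requiring the leading Gaussian $h^{-1/4}(\kappa/\pi)^{1/4}\re^{-\kappa s^2/(2h)}$ to have unit $\sL^2$-norm at leading order.

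For $j\geq 1$, order $h^{j+1}$ produces inhomogeneous transport equations $(2\Phir'\partial_s + \Phir'' - \kappa)a_j = a_{j-1}'' + \sum_{k=2}^{j+1}\mu_{\ri,k}\, a_{j+1-k}$ of the same Fuchsian type. Evaluation at $s=0$ forces a compatibility condition that determines $\mu_{\ri,j+1}$, after which the ODE is solved uniquely on $\Bri{\pi}$ by imposing $a_j(0)=0$. A Borel summation then realizes the formal series as a smooth function $A(s,h)$. Multiplying by the cutoff $\chir$ has no effect on $\I2er$ (where $\chir\equiv 1$), so the identity $(\tLr - \mu_{\ri}(h))\BKWr = (\tLr - \mu_{\ri}(h))\Psi_{h,\ri}$ holds there and the remainder estimate \eqref{Lr-lar-ter} follows immediately from the formal construction. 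The derivative formula comes from direct differentiation: the dominant contribution is produced by differentiating $\re^{-\Phir/h}$, giving $-h^{-5/4}\Phir'(s)\re^{-\Phir(s)/h} a_0(s)$, with a relative ${\cal O}(h)$ correction from $A'(s,h) = a_0'(s) + {\cal O}(h)$. The main technical obstacle throughout is the Fuchsian analysis at the degenerate point $s=0$: the singularity of each transport equation is exactly what selects the successive coefficients $\mu_{\ri,j}$, recovering the harmonic-oscillator spectrum at the bottom of the well.
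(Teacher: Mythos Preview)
Your proposal is correct and follows exactly the classical WKB construction that the paper invokes; the paper's own proof simply declares the result ``classical (see \cite{Har80,Rob87})'' and only spells out the computation of $a_0$ and its normalization constant via the Gaussian integral $\int_{\R}\re^{-\kappa s^2/h}\dx s$, which matches your leading-order $\sL^2$-normalization. Your more explicit treatment of the eikonal/transport hierarchy and the Fuchsian compatibility at $s=0$ is precisely what the paper is suppressing behind its references.
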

\begin{proof}
The proof of the result is classical (see \cite{Har80,Rob87}) and we just recall the computation of $a_{0}$, which is quite easy since we are in dimension one. For $s \in \Ier$, we check that
$$
V(s) = \kappa ^2 s^2 + {\cal O}(s^3) \qquad \mbox{ and}\qquad \Phir (s) = \kappa\frac{s^2}{2} + {\cal O}(s^3).
$$
Solving the transport equation \eqref{transport}, we get
$$
a_0(s) = K_0  \exp\left(- \int_0^s \frac{\Phir''(\sigma) - \kappa}{ 2 \Phir'(\sigma)} \dx\sigma\right),
$$
where $K_{0}$ is a normalization constant determined by
$$
1 = \int_{\Ier} \big|\BKWr(s)\big|^2\dx s = K_0^2 h^{-1/2} \int_\R \re^{- \kappa s^2/h} \dx s(1 + {\cal O}(h)) = K_0^2\sqrt{\frac{\pi}{\kappa}}+ {\cal O}(h).
$$
Thus $K_0 = ({\kappa/\pi})^{1/4}$.
\end{proof}
The explicit form of the quasimode will be used for the computation of the splitting between the first two eigenvalues of $\Lh$ in Section~\ref{Sec.Comput}.

\subsection{Agmon estimates and WKB approximation} \label{simpleagmon}
Let us recall the following lemma (see \cite{Ray13} for a close version) which will be useful to prove localization estimates.
\begin{lemma}\label{localization}
Let $\sH$ be a Hilbert space and $P$ and $Q$ be two unbounded and symmetric operators defined on a domain $\mathsf{D}\subset \sH$. We assume that $P(\mathsf{D})\subset \mathsf{D}$, $Q(\mathsf{D})\subset \mathsf{D}$ and $[[P, Q],Q]=0$ on $\mathsf{D}$.
Then, for $u\in \mathsf{D}$, we have
\[
\Re\langle Pu, P Q^2u \rangle=\|PQu\|^2-\|[Q,P]u\|^2.
\]
\end{lemma}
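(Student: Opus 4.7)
The plan is to expand the left-hand side in terms of the single commutator $R:=[Q,P]=QP-PQ$, and then exploit two elementary algebraic facts: the skew-symmetry $R^{*}=-R$, which follows immediately from the symmetry of $P$ and $Q$; and the commutation $[R,Q]=0$, which is exactly the hypothesis $[[P,Q],Q]=0$ rewritten (since $[P,Q]=-R$). These two facts are what will kill the unwanted cross-terms when one takes real parts at the end.

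The first step is to use the symmetry of $P$ to rewrite $\langle Pu,PQ^{2}u\rangle=\langle u,P^{2}Q^{2}u\rangle$, and then push the $P$'s past the $Q$'s by applying $PQ=QP-R$ twice. This yields the operator identity
\[P^{2}Q^{2}=QP^{2}Q-RPQ-PRQ\quad\text{on }\mathsf{D},\]
which is legitimate thanks to the stability assumption $P(\mathsf{D}),Q(\mathsf{D})\subset\mathsf{D}$. Pairing against $u$ and using symmetry of $P$ and $Q$ once more, the first term becomes $\langle Qu,P^{2}Qu\rangle=\|PQu\|^{2}$, so the task reduces to showing that the combined real part of the two $R$-terms equals $-\|Ru\|^{2}$.

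To handle the two remaining terms I would proceed asymmetrically. Skew-symmetry of $R$ directly gives $\langle u,RPQu\rangle=-\langle Ru,PQu\rangle$. For $\langle u,PRQu\rangle$, I would use in succession the symmetry of $P$, the commutation $RQ=QR$, the symmetry of $Q$, and the substitution $QP=PQ+R$ to obtain
\[\langle u,PRQu\rangle=\langle Pu,QRu\rangle=\langle QPu,Ru\rangle=\langle PQu,Ru\rangle+\|Ru\|^{2}.\]
Summing the two contributions, the $R$-terms produce $\langle PQu,Ru\rangle-\langle Ru,PQu\rangle+\|Ru\|^{2}$; the first difference is of the form $z-\bar{z}$ and therefore purely imaginary, so its real part vanishes and one is left with exactly $\|Ru\|^{2}=\|[Q,P]u\|^{2}$. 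Taking into account the minus signs in front of $RPQ$ and $PRQ$ in the expansion of $P^{2}Q^{2}$ gives the claimed identity.

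There is no genuine analytic obstacle in this argument; it is essentially a piece of non-commutative bookkeeping. The only delicate point is to anticipate the target form and push commutators in the correct order so that the final sum arranges itself as $(z-\bar{z})+\|Ru\|^{2}$. The double commutator hypothesis and the skew-symmetry of $R$ are precisely what makes that rearrangement work; removing either one breaks the cancellation.
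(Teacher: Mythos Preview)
Your argument is correct. The algebraic expansion $P^{2}Q^{2}=QP^{2}Q-RPQ-PRQ$ with $R=[Q,P]$, followed by the use of skew-symmetry $R^{*}=-R$ and the hypothesis $[R,Q]=0$ to reduce the cross-terms to a purely imaginary quantity plus $\|Ru\|^{2}$, gives exactly the claimed identity once the signs are tracked.

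There is nothing to compare with, however: the paper does not prove this lemma. It simply recalls it with a pointer to \cite{Ray13} for a close version, and immediately moves on to its application with $P=hD_{s}$ and $Q$ a multiplication operator. Your write-up therefore fills a gap the authors chose to leave to the reference.
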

This lemma will be applied with $P$ the derivation and $Q$ the multiplication by a smooth function.

With the aim of proving that our Ansatz is a good approximation of the first eigenfunction $\tphir $ of $\tLr$, we first establish some Agmon estimates.
\begin{proposition}\label{Agmon}
Let $\Phi$ be a Lipschitzian function such that
\begin{equation}\label{eq.Phi'glo}
V(s)-|\Phi'(s)|^2\geq 0,\qquad\forall s\in\Ier,
\end{equation}
and let us assume that there exist $M>0$ and $R>0$ such that for all $h\in(0,1)$,
\begin{align}
V(s)-|\Phi'(s)|^2\geq M h,&\qquad \forall s\in\Ier\cap\complement \Bri{Rh^{1/2}},\label{eq.Phi'}\\
 |\Phi(s)|\leq Mh,&\qquad \forall s\in\Bri{Rh^{1/2}}.\label{eq.Phibd}
\end{align}
Then, for all $C_{0}\in(0,M)$, there exist positive constants $c, C$ such that, for $h\in (0,1)$, $z\in[0,C_{0}h]$, $u\in\mathsf{Dom}\left(\tLr\right)$,
\begin{equation}\label{Agmon-L2}
ch\| \re^{\Phi/h}u\|_{\sL^2(\Ier)}\leq\| \re^{\Phi/h}(\tLr -z)u\|_{\sL^2(\Ier)}+Ch\|u\|_{\sL^2(\Ier\cap \Bri{Rh^{1/2}})},
\end{equation}
and
\begin{equation}\label{Agmon-H1}
\left\| h D_{s} \left(\re^{\Phi/h} u\right)\right\|_{\sL^2(\Ier)}^2
\leq {\frac{C}{h}} \| \re^{\Phi/h}(\tLr -z)u\|_{\sL^2(\Ier)}^2+ Ch\|u\|_{\sL^2(\Ier\cap \Bri{Rh^{1/2}})}^2.
\end{equation}
\end{proposition}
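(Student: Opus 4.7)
The plan is to follow the classical Agmon scheme adapted to the semiclassical parameter $h$, using Lemma \ref{localization} to derive the fundamental weighted identity. I would first apply this lemma with $P = hD_{s}$ and $Q$ the multiplication operator by $\re^{\Phi/h}$. Both $Q$ and $[P,Q] = -i\Phi'\re^{\Phi/h}$ are multiplication operators, so $[[P,Q],Q]=0$, and the hypothesis of the lemma is satisfied. Writing $v = \re^{\Phi/h}u$, which still vanishes at $\partial\Ier$ since $u$ does, the lemma (together with the multiplication terms $V$ and $-z$, which commute with $Q$) yields the standard one-dimensional Agmon identity
\begin{equation*}
\Re\langle \re^{\Phi/h}(\tLr - z)u, v\rangle = \|hD_{s} v\|^2 + \int_{\Ier}\bigl(V(s) - z - |\Phi'(s)|^2\bigr)|v(s)|^2\,\dx s.
\end{equation*}

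The next step is to split the integral on the right into an integral over $\Ier\setminus \Bri{Rh^{1/2}}$ and one over $\Bri{Rh^{1/2}}$. On the exterior region, assumption \eqref{eq.Phi'} combined with $z\leq C_0 h$ gives the pointwise lower bound $V - z - |\Phi'|^2 \geq (M-C_0)h > 0$, whereas on the small ball only the global bound \eqref{eq.Phi'glo} is available, yielding $V - z - |\Phi'|^2 \geq -C_0 h$. Rearranging these contributions produces
\begin{equation*}
\|hD_{s} v\|^2 + (M-C_0)h\|v\|_{\sL^2(\Ier)}^2 \leq \Re\langle \re^{\Phi/h}(\tLr - z)u, v\rangle + M h \int_{\Bri{Rh^{1/2}}}|v|^2\,\dx s.
\end{equation*}
At this point the pointwise bound \eqref{eq.Phibd} is used to convert the last term back to $u$: since $|v|^2 = \re^{2\Phi/h}|u|^2 \leq \re^{2M}|u|^2$ on $\Bri{Rh^{1/2}}$, one obtains an estimate by a constant multiple of $h\|u\|_{\sL^2(\Ier\cap \Bri{Rh^{1/2}})}^2$.

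Finally, a Young inequality applied to the scalar product on the right with absorption parameter calibrated against $(M-C_0)h$ absorbs half of $(M-C_0)h\|v\|^2$ into the left-hand side and produces the combined bound
\begin{equation*}
\|hD_s v\|^2 + \frac{(M-C_0)h}{2}\|v\|_{\sL^2(\Ier)}^2 \leq \frac{C}{h}\|\re^{\Phi/h}(\tLr-z)u\|_{\sL^2(\Ier)}^2 + Ch\|u\|_{\sL^2(\Ier\cap\Bri{Rh^{1/2}})}^2.
\end{equation*}
Dropping the gradient term and taking square roots yields \eqref{Agmon-L2}, while dropping the $\|v\|^2$ term and observing that $\|hD_s(\re^{\Phi/h}u)\|_{\sL^2} = \|hD_s v\|_{\sL^2}$ yields \eqref{Agmon-H1}.

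The whole argument is routine once the framework of Lemma \ref{localization} is in place; the only mildly delicate point is the bookkeeping around the small ball $\Bri{Rh^{1/2}}$, where $V - |\Phi'|^2$ can be as small as zero and one must rely on the uniform bound \eqref{eq.Phibd} on $|\Phi|$ to pass from $|v|^2$ to $|u|^2$ without losing the exponential factor. Choosing the Young absorption parameter and the splitting of $(M-C_0)h\|v\|^2$ so that they are compatible is the only place where constants need to be handled with some care.
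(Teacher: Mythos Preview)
Your proof is correct and follows essentially the same route as the paper: apply Lemma~\ref{localization} with $P=hD_s$ and $Q=\re^{\Phi/h}$ to obtain the weighted identity, split the potential term over $\Bri{Rh^{1/2}}$ and its complement using \eqref{eq.Phi'glo}--\eqref{eq.Phibd}, and absorb the cross term. The only cosmetic differences are that the paper first writes the identity for $\tLr$ and inserts $z$ afterward, and that it leaves the absorption step (``since $C_0<M$, this gives \eqref{Agmon-L2}'') implicit rather than spelling out the Young inequality.
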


\begin{proof}
We apply Lemma \ref{localization} with $P = hD_s$, $Q= \re^{\Phi/h}$ and $u \in\mathsf{Dom}\left(\tLr\right)$ to get
\begin{multline*}
\Re\left(\int_{\Ier} hD_s u\, hD_{s}\left(\re^{2\Phi/h}u\right)\dx s\right)\\
= \int_{\Ier} |h D_{s}(\re^{\Phi/h}u)|^2\dx s - \int_{\Ier} |\Phi'(s)|^2 \re^{2\Phi/h} |u|^2\dx s.
\end{multline*}
Integrating by parts, adding the electric potential $V$, and recalling that $\tLr = h^2D_s^2 + V$, we find
\begin{align*}
\int_{\Ier} |h D_{s}(\re^{\Phi/h}u)|^2\dx s&+\int_{\Ier} (V(s)-|\Phi'(s)|^2) \re^{2\Phi/h} |u|^2\dx s\\
&= \Re\left(\int_{\Ier}\tLr u\, \re^{2\Phi/h}u \dx s\right)
\leq\| \re^{\Phi/h}\tLr u\|\| \re^{\Phi/h}u\|.
\end{align*}
Using \eqref{eq.Phi'glo} and \eqref{eq.Phi'}, we get
$$\int_{\Ier} |hD_{s}(\re^{\Phi/h}u)|^2\dx s+ Mh\int_{\Ier\cap\complement \Bri{Rh^{1/2}}}\re^{2\Phi/h}|u|^2\dx s\leq\| \re^{\Phi/h}\tLr u\|\| \re^{\Phi/h}u\|.$$
Thanks to \eqref{eq.Phibd}, $\Phi/h$ is uniformly bounded with respect to $h$ on $\Bri{Rh^{1/2}}$ and we deduce
$$\|h D_{s}(\re^{\Phi/h}u)\|^2+Mh\| \re^{\Phi/h}u\|^2\leq\| \re^{\Phi/h}\tLr u\|\| \re^{\Phi/h}u\|+C_{R}h\|u\|^2_{\sL^2(\Ier\cap \Bri{Rh^{1/2}})}.$$
For $|z|\leq C_{0}h$, we get
\begin{multline}\label{eq:Agmon}
\|hD_{s}(\re^{\Phi/h}u)\|^2+(M-C_{0})h\| \re^{\Phi/h}u\|^2\\
\leq\| \re^{\Phi/h}(\tLr -z)u\|\| \re^{\Phi/h}u\|+C_{R}h\|u\|^2_{\sL^2(\Ier\cap \Bri{Rh^{1/2}})}.
\end{multline}
Since $C_{0}<M$, this gives \eqref{Agmon-L2}. Then we combine \eqref{eq:Agmon} with \eqref{Agmon-L2} to get \eqref{Agmon-H1}.
\end{proof}
\begin{proposition}\label{prop.poids}
Let $c_{0}>0$ such that
\begin{equation}\label{eq.c0}
V(s)\geq c_{0}s^2\qquad \mbox{ and}\qquad \Phir(s)\geq c_{0}s^2,\qquad\forall s\in\Ier.
\end{equation}
Proposition \ref{Agmon} applies in the following cases:
\begin{enumerate}[ \rm(a)]
\item\label{poids1} for $\varepsilon \in(0,1)$, the rough weight $\Phire=\sqrt{1-\varepsilon }\Phir $ with $R>0$ and $M=c_{0}\varepsilon R^2$,
\item\label{poids2} for $N\in\mathbb{N}^*$ and $h\in(0,1)$, the precised weight $\tPhirN =\Phir - N h \ln \left(\max \left( \frac{\Phir}{h}, N\right)\right)$, with $R=\sqrt{\frac{N}{c_{0}}}$ and $M=N\inf_{\Ier}\frac{V}{\Phir}$,
\item\label{poids3} for $\varepsilon \in(0,1)$, $N\in\mathbb{N}^*$ and $h\in(0,1)$, the intermediate weight
\begin{equation}\label{eq.poids3}
\hPhirN (s) = \min \left\{\tPhirN (s), \sqrt{1-\varepsilon }\displaystyle{\inf_{t \in \supp\chi'_{\ri}} \left(\Phir(t) +\int_{[s,t]}\sqrt{V(\sigma)}\dx \sigma\right)}\right\},
\end{equation}
with $R=\sqrt{\frac{N}{c_{0}}}$ and $M=N\min\left(\varepsilon ,\inf_{\Ier}\frac{V}{\Phir}\right)$, where we recall that $\chir'$ is supported in $\Ier \setminus \I2er$.
\end{enumerate}
\end{proposition}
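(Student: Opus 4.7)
The plan is to verify the three hypotheses of Proposition \ref{Agmon}---the pointwise non-negativity \eqref{eq.Phi'glo}, the uniform gap \eqref{eq.Phi'}, and the smallness bound \eqref{eq.Phibd}---for each of the three weights, exploiting the fundamental identity $|\Phir'(s)|^{2} = V(s)$ and the quadratic lower bounds \eqref{eq.c0}.

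Case \eqref{poids1} is the warm-up. Since $\Phire' = \sqrt{1-\varepsilon}\,\Phir'$, one gets immediately $V - |\Phire'|^{2} = \varepsilon V \geq 0$, hence \eqref{eq.Phi'glo}; and \eqref{eq.c0} gives $\varepsilon V(s) \geq \varepsilon c_{0} s^{2} \geq \varepsilon c_{0} R^{2} h$ on $\complement \Bri{Rh^{1/2}}$, which is \eqref{eq.Phi'} with $M = c_{0}\varepsilon R^{2}$. The smallness \eqref{eq.Phibd} follows from the Taylor expansion of $V$ at the origin: $\Phire(s) = {\cal O}(s^{2}) = {\cal O}(h)$ on $\Bri{Rh^{1/2}}$.

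Case \eqref{poids2} is where the logarithmic correction delivers its quantitative gain. We partition $\Ier$ along the level set $\{\Phir = Nh\}$. In the inner region $\{\Phir \leq Nh\}$ the weight reduces to $\tPhirN = \Phir - Nh\ln N$, so $|\tPhirN'|^{2} = V$ and \eqref{eq.Phi'glo} holds with equality. In the outer region $\{\Phir \geq Nh\}$ a direct differentiation gives
$$
V - |\tPhirN'|^{2} = V - V\left(1 - \frac{Nh}{\Phir}\right)^{2} = \frac{Nh}{\Phir}\,V\left(2 - \frac{Nh}{\Phir}\right) \geq Nh\cdot\frac{V}{\Phir}.
$$
The choice $R = \sqrt{N/c_{0}}$ ensures through \eqref{eq.c0} that $\complement \Bri{Rh^{1/2}} \subset \{\Phir \geq Nh\}$, so \eqref{eq.Phi'} holds on the required region with $M = N\inf_{\Ier}(V/\Phir)$. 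The bound \eqref{eq.Phibd} is a direct check: on $\Bri{Rh^{1/2}}$ one has $\Phir/h = {\cal O}(1)$, the logarithm is uniformly bounded, and $|\tPhirN| = {\cal O}(h)$.

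Case \eqref{poids3} combines the previous two analyses via the minimum structure. Let $\Xi(s)$ denote the infimum in the second slot of the $\min$ defining $\hPhirN$. Differentiating through the infimum and using $|\partial_{s}\int_{[s,t]}\sqrt{V(\sigma)}\dx\sigma| = \sqrt{V(s)}$ yields $|\Xi'|\leq\sqrt{V}$ a.e., hence $V - |\sqrt{1-\varepsilon}\,\Xi'|^{2} \geq \varepsilon V$. Since $\hPhirN$ is the minimum of two Lipschitz functions, Rademacher's theorem implies that $|\hPhirN'|$ equals a.e.\ the absolute derivative of whichever branch realises the minimum, so $V - |\hPhirN'|^{2}$ is pointwise bounded below by the minimum of the two bounds from \eqref{poids1} and \eqref{poids2}. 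On $\complement\Bri{Rh^{1/2}}$ with $R = \sqrt{N/c_{0}}$, the $\Xi$-branch gives $\varepsilon V \geq \varepsilon c_{0} R^{2} h = \varepsilon N h$ while the $\tPhirN$-branch gives $\geq N h\inf_{\Ier}(V/\Phir)$, whence $V - |\hPhirN'|^{2} \geq N\min(\varepsilon, \inf_{\Ier}(V/\Phir))\,h$, which is \eqref{eq.Phi'} with the claimed $M$. Finally \eqref{eq.Phibd} follows from $\hPhirN \leq \tPhirN = {\cal O}(h)$ together with $\hPhirN \geq -|\tPhirN| = -{\cal O}(h)$ on $\Bri{Rh^{1/2}}$. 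The only delicate point I foresee is the justification of the a.e.\ identities for the derivatives of the minimum and of the infimum, which is handled by the fact that Proposition \ref{Agmon} only requires $\Phi$ to be Lipschitz.
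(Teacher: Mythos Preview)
Your proof is correct and follows essentially the same route as the paper: the same eikonal identity $|\Phir'|^2=V$, the same partition $\{\Phir\lessgtr Nh\}$ in case \eqref{poids2} with the identical computation of $V-|\tPhirN'|^2$, and the same reduction of case \eqref{poids3} to the two previous cases via the a.e.\ alternative for the derivative of a minimum. The only cosmetic differences are that the paper first observes the infimum over $t\in\supp\chir'$ is actually attained (compactness), whereas you invoke the Lipschitz bound $|\Xi'|\leq\sqrt{V}$ directly; and you spell out the constant $M$ in case \eqref{poids3} and the two-sided bound for \eqref{eq.Phibd} a bit more explicitly than the paper does.
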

\begin{proof}
Note that the existence of $c_{0}>0$ is guarranted since the function $V$ admits a unique and non degenerate minimum on $\Ier$ at $0$. Using the definition \eqref{Phi0} of $\Phir$, we have directly \eqref{eq.Phibd} for $\Phir$ and consequently for the other weights $\tPhirN$ and $\hPhirN$ which are smaller. Let us now prove \eqref{eq.Phi'glo} and \eqref{eq.Phi'} for each choice.
\begin{enumerate}[\rm (a)]
\item We have $V-|\Phire'|^2=\varepsilon V$. Combining this with the positivity of $V$ or \eqref{eq.c0} gives \eqref{eq.Phi'glo} and \eqref{eq.Phi'}.
\item On $\{\Phir <Nh\}$, we have $|\tPhirN'|^2=|\Phir'|^2=V$. \\
On $\{\Phir \geq Nh\}$, we get
$$\tPhirN'=\Phir'\left(1-\frac{Nh}{\Phir}\right),$$
so that
\begin{equation}\label{eq.vitem2}
V-|\tPhirN'|^2=V\frac{Nh}{\Phir}\left(2-\frac{Nh}{\Phir}\right)\geq Nh\frac{V}{\Phir}\geq cNh\geq 0,
\end{equation}
since the function ${V}/{\Phir}$ is continuous and bounded from below by some $c>0$ on $\Ier$. This proves \eqref{eq.Phi'glo}. According to \eqref{eq.c0}, for all $R>0$ and $h\in(0,1)$, we have $\Phir \geq c_{0}R^2h$ on $\Ier\cap\complement \Bri{Rh^{1/2}}$. In particular, for $R\geq R_{0}=\sqrt{{N}/{c_{0}}}$, we get
$$\Ier\cap\complement \Bri{Rh^{1/2}}\subset \{\Phir \geq Nh\}.$$
Recalling \eqref{eq.vitem2}, this establishes \eqref{eq.Phi'}.
\item We notice that the infimum in the definition of $\hPhirN$ is a minimum. Thus, almost everywhere on $\Ier$, we have either $|\hPhirN'|=\sqrt{1-\varepsilon }\sqrt{V}$, or $|\hPhirN'|=|\tPhirN'|$. Then we apply  Proposition~\ref{prop.poids} \eqref{poids1} and \eqref{poids2}.
\end{enumerate}
\end{proof}
\begin{remark}
The weights introduced in Proposition \ref{prop.poids} are essential to prove that the eigenfunctions of $\tLr$ are approximated by their WKB expansion in the space $\sL^2(\re^{\Phir/h}ds)$ (as we will see in Proposition \ref{uniform}). The rough weight $\Phire=\sqrt{1-\varepsilon }\Phir$ would not be enough to get the main term of the tunneling estimate \eqref{eq.tun}. The precised weight $\tPhirN$ is introduced to get an approximation of the eigenfunctions in the space $\sL^2(h^{-N}\re^{\Phir/h}\dx s)$ with a fixed and large $N\in\mathbb{N}$; the factor $h^{-N}$ will be absorbed since the approximation is valid modulo ${\cal O}(h^{\infty})$. The intermediate weight  $\hPhirN$ is only a slight modification of $\tPhirN$ (see Lemma \ref{lem.hPhirN}) on $\complement K$ where the weight $\tPhirN$ becomes bad.
\end{remark}
We end this section with some properties, which will be used later, about the weight $\hPhirN$ defined in \eqref{eq.poids3}.
\begin{lemma}\label{lem.hPhirN}
Let $K$ be a compact with $K\subset\I2er$. We consider the weight defined in Proposition \ref{prop.poids} \eqref{poids3}. For all $N\in\mathbb{N}^*$, there exists $\varepsilon_{0}$ such that for all $0<\varepsilon < \varepsilon_0$,  there exist $h_{0}>0$ and $R>0$ such that, for all $h\in(0,h_{0})$, we have
\begin{enumerate}[\rm (1)]
\item\label{eq.lmhPhi1} $\hPhirN \leq \Phir$ on $\Ier$,
\item\label{eq.lmhPhi3} $\hPhirN =\tPhirN$ on $K$,
\item\label{eq.lmhPhi2} $\hPhirN = \sqrt{1-\varepsilon }\Phir$ on $\supp \chir'$.
\end{enumerate}
\end{lemma}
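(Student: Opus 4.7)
The plan is to verify the three assertions in turn, relying on two basic ingredients: the triangle inequality for the Agmon length
\[
\Phir(t)+\int_{[s,t]}\sqrt{V(\sigma)}\dx\sigma\geq\Phir(s),\qquad \forall s,t\in\Ier,
\]
(the length of any path $0\to t\to s$ is at least $\Phir(s)$) and the elementary observation that $\ln(\max(\Phir/h,N))\geq 0$ as soon as $N\geq 1$. Assertion \eqref{eq.lmhPhi1} follows at once: the logarithm in $\tPhirN$ is non-negative, so $\tPhirN\leq\Phir$ on $\Ier$, and $\hPhirN\leq\tPhirN$ by \eqref{eq.poids3}.

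For assertion \eqref{eq.lmhPhi2}, I would first evaluate the infimum in \eqref{eq.poids3} at $s\in\supp\chir'$: the choice $t=s$ gives the upper bound $\Phir(s)$, and the triangle inequality above gives the matching lower bound, so the second argument of the minimum equals $\sqrt{1-\varepsilon}\Phir(s)$. Since $\supp\chir'\subset\Ier\setminus\I2er$ is compact and bounded away from $0$, $\Phir$ has a positive lower bound $c_1$ there. For $h$ small enough, $\Phir(s)/h\geq N$ and the logarithmic correction $Nh\ln(\Phir(s)/h)=o(1)$ uniformly on $\supp\chir'$; consequently $\tPhirN(s)\geq\sqrt{1-\varepsilon}\Phir(s)$ and the minimum is attained by the second argument.

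Assertion \eqref{eq.lmhPhi3} is the most delicate. The bare triangle inequality only gives the second argument $\geq\sqrt{1-\varepsilon}\Phir(s)$, which does not dominate $\tPhirN(s)\leq\Phir(s)$. I would sharpen it by a case analysis according to whether $t\in\supp\chir'$ lies on the same or on the opposite side of $0$ as $s\in K$: using the strict separation between the compact $K\subset\I2er$ and $\supp\chir'$ together with the strict monotonicity of $\Phir$ in $|s|$ on each half of $\Ier$, one obtains a uniform additive gap
\[
\Phir(t)+\int_{[s,t]}\sqrt{V(\sigma)}\dx\sigma\geq\Phir(s)+2\delta,\qquad \forall s\in K,\ \forall t\in\supp\chir',
\]
for some $\delta=\delta(K,\eta,V)>0$. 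The quantity $\sqrt{1-\varepsilon}(\Phir(s)+2\delta)-\tPhirN(s)$ is then bounded below by $2\sqrt{1-\varepsilon}\,\delta-(1-\sqrt{1-\varepsilon})\sup_K\Phir$, which is non-negative once $\varepsilon<\varepsilon_0(K,N)$ is small enough. The minimum in \eqref{eq.poids3} is then attained by $\tPhirN$ throughout $K$.

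The main obstacle is the sharpened triangle inequality needed for \eqref{eq.lmhPhi3}: one cannot simply invoke $\sup_K\Phir<\inf_{\supp\chir'}\Phir$, as this may fail when $V$ is asymmetric and $K$ extends near one boundary of $\I2er$. The same-side/opposite-side case analysis is precisely what recovers the uniform additive gap $2\delta$ and permits a single threshold $\varepsilon_0$ to be chosen independently of $h$ and uniformly over $K$.
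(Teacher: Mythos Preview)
Your proof is correct and follows the same overall strategy as the paper: the triangle inequality $\tPhirN\leq\Phir$ for \eqref{eq.lmhPhi1}, the evaluation of the infimum at $t=s$ together with $\tPhirN=\Phir+\mathcal{O}(h\ln h)$ on $\supp\chir'$ for \eqref{eq.lmhPhi2}, and a strict uniform gap between $\Phir(s)$ and the infimum over $\supp\chir'$ for \eqref{eq.lmhPhi3}. The only difference is one of presentation: where the paper simply invokes continuity and the disjointness of the compacts $K$ and $\supp\chir'$ to obtain $\Phir(s)<\inf_{t\in\supp\chir'}(\Phir(t)+\int_{[s,t]}\sqrt V)$ uniformly, you make this explicit via the same-side/opposite-side case analysis yielding the additive gap $2\delta$. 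Your remark that the naive bound $\sup_K\Phir<\inf_{\supp\chir'}\Phir$ can fail for asymmetric $V$ is correct and justifies the extra care; the paper's compactness argument sidesteps this by working directly with the two-variable function $(s,t)\mapsto\Phir(t)+\int_{[s,t]}\sqrt V-\Phir(s)$.
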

\begin{proof}
\begin{enumerate}[\rm (1)]
\item The first inequality comes immediately from the definition of $\hPhirN$.
\item By continuity and since  $K$ and  the complementary of $\I2er$ are disjoint compacts, there exists $\varepsilon_0$ such that for all $0 < \varepsilon <\varepsilon_0$ and for all $s \in K$,
$$\tPhirN(s)\leq \Phir(s)\leq \sqrt{1-\varepsilon}\displaystyle{\inf_{t \in \supp\chi'_{\ri}} \left(\Phir(t) +\int_{[s,t]}\sqrt{V(\sigma)}\dx \sigma\right)}.$$
By definition of $\hPhirN$, we deduce that $\hPhirN = \tPhirN$ on $K$.
\item Let us now consider $s\in\supp\chir'$. There exists $h_{0}>0$ (depending on $\varepsilon$) such that for all $h\in(0,h_{0})$, we have
\[\begin{cases}
\inf_{t \in \supp\chi'_{\ri}} \left(\Phir(t) +\int_{[s,t]}\sqrt{V(\sigma)}\dx \sigma\right) &= \Phir(s),\\[5pt]
\tPhirN (s) =\Phir(s) + {\cal O}(h\ln h) &\geq \sqrt{1-\varepsilon }\Phir(s).
\end{cases}\]
Thus $\hPhirN  = \sqrt{1-\varepsilon }\Phir $ on $\supp\chir'$.
\end{enumerate}
\end{proof}

\subsection{Weighted comparison between quasimodes and eigenfunctions}
We may now provide the approximation of $\tphir $ by the WKB construction $\BKWr$ defined in \eqref{Ansatz-WKB}. Let us introduce the projection
$$\Pi_{\ri}\psi=\langle\psi, \tphir \rangle \tphir .$$
\begin{proposition} \label{uniform}
Let $K$ be a compact set with $K\subset\I2er$. We have both in the $\sL^\infty(K)$ and in the $\sL^2(K)$ sense
\begin{align}
  \re^{\Phir/h}\left(\BKWr -\Pi_{\ri}\BKWr \right) &= {\cal O}(h^{\infty}), \label{eq.unif1}\\
  \re^{\Phir/h} D_{s}\left(\BKWr -\Pi_{\ri}\BKWr \right) &= {\cal O}(h^{\infty}). \label{eq.unif2}
\end{align}
\end{proposition}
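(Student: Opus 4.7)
The plan is to upgrade the orthogonality $\langle \BKWr - \Pi_{\ri}\BKWr, \tphir\rangle = 0$ together with the fact that $\BKWr$ is a very good quasimode for $\tLr$ into a weighted $\sL^2$ estimate for $v := \BKWr - \Pi_{\ri}\BKWr$ using the intermediate weight $\re^{\hPhirN/h}$ of Proposition~\ref{prop.poids}\eqref{poids3}, then transfer this into the desired estimate with weight $\re^{\Phir/h}$ on $K$, and finally conclude with a one-dimensional Sobolev embedding to obtain the $\sL^\infty$ bounds.

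First I would establish the unweighted $\sL^2$ smallness $\|v\|_{\sL^2(\Ier)} = {\cal O}(h^\infty)$. Writing $(\tLr - \mu_{\ri}(h))\BKWr = \chir(\tLr - \mu_{\ri}(h))\Psi_{h,\ri} + [\tLr,\chir]\Psi_{h,\ri}$, the first term is ${\cal O}(h^\infty)$ in $\sL^2(\I2er)$ by the WKB construction \eqref{Lr-lar-ter}, and the commutator, supported on $\supp\chir' \subset \Ier\setminus\I2er$, is pointwise of order $\re^{-\Phir/h}$ on a set where $\Phir$ is bounded below by a positive constant, hence also ${\cal O}(h^\infty)$. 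Standard spectral arguments give $\mu_{\ri}(h) - \lar = {\cal O}(h^\infty)$, and the harmonic approximation at the well produces a spectral gap of order $h$ between $\lar$ and the next Dirichlet eigenvalue of $\tLr$ on $\Ier$, which yields $\|v\|_{\sL^2(\Ier)} \lesssim h^{-1}\|(\tLr - \lar)\BKWr\|_{\sL^2(\Ier)} = {\cal O}(h^\infty)$.

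Next, I would apply \eqref{Agmon-L2}-\eqref{Agmon-H1} to $v$ with $\Phi = \hPhirN$ and $z = \lar$. Using $\tLr \tphir = \lar \tphir$, one has $(\tLr - \lar)v = (\tLr - \mu_{\ri}(h))\BKWr + (\mu_{\ri}(h) - \lar)\BKWr$. For the first piece, Lemma~\ref{lem.hPhirN}\eqref{eq.lmhPhi1} gives $\hPhirN \leq \Phir$, so the $\I2er$-contribution is absorbed by \eqref{Lr-lar-ter}; on $\supp\chir'$, Lemma~\ref{lem.hPhirN}\eqref{eq.lmhPhi2} gives $\hPhirN = \sqrt{1-\varepsilon}\,\Phir$, and since $\Phir$ is bounded below on this set, the factor $\re^{(\hPhirN - \Phir)/h}$ is exponentially small. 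The second piece is trivially ${\cal O}(h^\infty)$ because $\mu_{\ri}(h) - \lar = {\cal O}(h^\infty)$ and $\re^{\hPhirN/h}\BKWr$ is uniformly bounded. Combined with Step~1 bounding the remainder $\|v\|_{\sL^2(\Ier\cap\Bri{Rh^{1/2}})}$, this yields $\|\re^{\hPhirN/h} v\|_{\sL^2(\Ier)} = {\cal O}(h^\infty)$, and using \eqref{Agmon-H1} together with the identity $\re^{\hPhirN/h}hD_{s}v = hD_{s}(\re^{\hPhirN/h}v) - \hPhirN'\,\re^{\hPhirN/h}v$, also $\|\re^{\hPhirN/h}D_{s}v\|_{\sL^2(\Ier)} = {\cal O}(h^\infty)$.

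Finally, on the compact $K$, Lemma~\ref{lem.hPhirN}\eqref{eq.lmhPhi3} gives $\hPhirN = \tPhirN = \Phir - Nh\ln(\max(\Phir/h,N))$, from which $\re^{\Phir/h}\leq C h^{-N}\re^{\hPhirN/h}$ on $K$; since the ${\cal O}(h^\infty)$ in the previous step is uniform for fixed $N$, the polynomial loss $h^{-N}$ is absorbed and we obtain \eqref{eq.unif1}-\eqref{eq.unif2} in the $\sL^2(K)$ sense. The $\sL^\infty(K)$ bound then follows from the $\sH^1\hookrightarrow\sL^\infty$ embedding in dimension one, applied on a slightly larger compact $K'$ with $K\subset K'\subset\I2er$, noticing that both $\re^{\Phir/h}v$ and $D_{s}(\re^{\Phir/h}v)$ are controlled in $\sL^2(K')$ thanks to the uniform boundedness of $\Phir'$; for the derivative part of \eqref{eq.unif2}, elliptic regularity, i.e.\ writing $h^2 D_s^2 v = (\tLr - V)v$, bootstraps the previous bounds to a pointwise control of $D_{s}v$. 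The main technical obstacle is the handling of the weight on $\supp\chir'$: the commutator term $[\tLr,\chir]\Psi_{h,\ri}$ carries only an exponential factor $\re^{-\Phir/h}$ there, so the naive weight $\Phir$ would leave no margin, while the intermediate weight $\hPhirN$ is precisely designed to lose a small factor $\sqrt{1-\varepsilon}$ on $\supp\chir'$ while preserving $\Phir$ (up to the harmless logarithmic loss absorbable by $h^{-N}$) on $K$.
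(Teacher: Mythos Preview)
Your proposal is correct and follows essentially the same route as the paper: apply the Agmon estimate of Proposition~\ref{Agmon} with the intermediate weight $\hPhirN$, split $(\tLr-\lar)\BKWr$ into the WKB bulk and the commutator localized on $\supp\chir'$, use Lemma~\ref{lem.hPhirN} to control each piece, invoke the spectral gap for the unweighted remainder, then pass from $\hPhirN$ to $\Phir$ on $K$ at the cost of $h^{-N}$, and finish with Sobolev embedding plus an elliptic bootstrap for the derivative. The only cosmetic differences are that you isolate $\mu_{\ri}(h)-\lar$ explicitly and perform the Sobolev embedding on a slightly enlarged compact $K'\subset\I2er$, whereas the paper embeds directly on $\Ier$; neither changes the substance of the argument.
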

\begin{proof}
Let us apply Proposition \ref{Agmon} with $u=\BKWr-\Pi_{\ri}\BKWr$ and $z=\lar$ and the weight $\Phi=\hPhirN$ defined in Proposition \ref{prop.poids} \eqref{poids3}. We get
\begin{multline}\label{eq.profProp251}
ch\| \re^{\hPhirN /h}u\|^2_{\sL^2(\Ier)}+\left\| h D_{s} \left(\re^{\hPhirN /h} u\right)\right\|_{\sL^2(\Ier)}^2\\
 \leq Ch^{-1}\| \re^{\hPhirN /h}(\tLr -\lar)\BKWr\|_{\sL^2(\Ier)}^2+ Ch\|u\|_{\sL^2(\Ier\cap \Bri{Rh^{1/2}})}^2.
\end{multline}
Let us investigate the first term in the r.h.s. of \eqref{eq.profProp251}.
Using Lemma~\ref{lem.QM}, we have, in the sense of differential operators,
\begin{align}
  \re^{\hPhirN /h}(\tLr &-\lar)\BKWr=\re^{\hPhirN /h}(\tLr -\lar)\chir\Psi_{h,\ri}\nonumber\\
  &=\re^{\hPhirN /h}\chir(\tLr -\lar)\Psi_{h,\ri}+\re^{\hPhirN /h}[\tLr,\chir]\Psi_{h,\ri}\nonumber\\
  &=\re^{(\hPhirN -\Phir )/h}{\cal O}_{\sL^\infty(\Ier)}(h^\infty)+\re^{(\hPhirN -\Phir )/h}{\cal O}_{\sL^\infty(\supp\chir')}(1).\label{eq.termPropo26}
\end{align}
Using Lemma~\ref{lem.hPhirN}, there exists $c_{1}>0$ such that
\begin{align*}
  \re^{(\hPhirN -\Phir )/h}{\cal O}_{\sL^\infty(\Ier)}(h^\infty)&={\cal O}_{\sL^\infty(\Ier)}(h^\infty),\\
  \re^{(\hPhirN -\Phir )/h}=\re^{-(1-\sqrt{1-\varepsilon })\Phir/h}\leq \re^{-c_{1}/h} &={\cal O}(h^{\infty})\quad\mbox{ on }\supp\chir'.
\end{align*}
Putting these estimates in \eqref{eq.termPropo26}, we deduce that
\begin{equation} \label{eq.lhrlambda}
Ch^{-1}\| \re^{\hPhirN /h}(\tLr -\lar)\BKWr\|_{\sL^2(\Ier)}^2 = {\cal O}(h^\infty).
\end{equation}
Let us deal with the second term in the r.h.s. of \eqref{eq.profProp251}.
By definition, $\Pi_{\ri}\BKWr $ belongs to the kernel of $\tLr-\lar$ and, since the gap between the lowest eigenvalues of $\tLr$ is of order $h$, the spectral theorem proves that there exists $c>0$ such that
\begin{multline}\label{eq.thsp}
ch\| u \|_{\sL^2(\Ier)} =ch\|\BKWr -\Pi_{\ri}\BKWr \|_{\sL^2(\Ier)}\\
\leq\left\|\left(\tLr -\lar\right) u\right\|_{\sL^2(\Ier)}
=\left\|\left(\tLr -\lar\right)\BKWr\right\|_{\sL^2(\Ier)} = {\cal O}(h^\infty),
\end{multline}
where we have used \eqref{Lr-lar-ter} for the last estimate.\\
Consequently \eqref{eq.profProp251} becomes
\begin{equation}\label{eq.hDSexpu}
ch\| \re^{\hPhirN /h}u\|^2_{\sL^2(\Ier)}+\left\| h D_{s} \left(\re^{\hPhirN /h} u\right)\right\|_{\sL^2(\Ier)}^2
 =  {\cal O}(h^\infty).
 \end{equation}
By Sobolev embedding, we deduce that, as well as in $\sL^\infty(\Ier)$ as in $\sL^2(\Ier)$,
$$
 h\re^{\hPhirN /h} u =  {\cal O}(h^\infty).
$$
To deduce \eqref{eq.unif1}, we first recall Lemma \ref{lem.hPhirN} \eqref{eq.lmhPhi3}, so that $\hPhirN = \tPhirN$ on $K$. Then we have, in $\sL^\infty(K)$ and in $\sL^2(K)$,
\begin{equation} \label{eq.linfestimate}
 h\re^{\tPhirN /h} u =  {\cal O}(h^\infty).
\end{equation}
Now the definition of $\tPhirN$ (given in Proposition \ref{prop.poids} \eqref{poids2}) implies that in $\sL^\infty(K)$ we have
\begin{equation} \label{eq.differencetphiphi}
  \re^{(\Phir-\tPhirN)/h} = {\cal O}(h^{-N}).
\end{equation}
By using \eqref{eq.linfestimate}, we get, in $\sL^\infty(K)$ and in $\sL^2(K)$,
\begin{equation} \label{eq.linfestimate2}
\re^{\Phir /h} u = h^{-1}{\cal O}(h^{-N}) {\cal O}(h^\infty) =  {\cal O}(h^\infty).
\end{equation}
This proves \eqref{eq.unif1}.\\
Now we deal with the $\sL^2(K)$ estimate in \eqref{eq.unif2}. Let us recall that Lemma~\ref{lem.hPhirN} \eqref{eq.lmhPhi3} gives
\begin{equation}\label{eq.PhirtPhirK}
\hPhirN = \tPhirN\qquad \mbox{ on }K.
\end{equation}
We first write that
\begin{equation}\label{eq.hDSexpu5}
\left\| \re^{\hPhirN /h}h D_{s} u\right\|_{\sL^2(K)}^2
\leq \left\| h D_{s} \left(\re^{\hPhirN /h} u\right)\right\|_{\sL^2(K)} +  \left\| \tPhirN'\left(\re^{\hPhirN /h} u\right)\right\|_{\sL^2(K)}.
\end{equation}
Using that $|\tPhirN'|^2\leq V$ which is bounded and \eqref{eq.hDSexpu}, we deduce, by Lemma \eqref{lem.hPhirN} \eqref{eq.lmhPhi3}, 
\begin{equation}\label{eq.hDSexpu6}
\left\| \re^{\tPhirN /h} h D_{s} u \right\|_{\sL^2(K)} =\left\| \re^{\hPhirN /h} h D_{s} u \right\|_{\sL^2(K)}^2 = {\cal O}(h^\infty).
\end{equation}
Next using \eqref{eq.differencetphiphi}, we have the desired $\sL^2(K)$ estimate in \eqref{eq.unif2}:
\begin{equation}\label{eq.hDSexpu8}
\left\| \re^{\Phir /h} h D_{s} u \right\|_{\sL^2(K)} = {\cal O}(h^\infty).
\end{equation}
As a complementary result and for further use, let us do a new  commutation with $hD_s$. We have
\begin{equation*}
\| h D_{s}(\re^{\Phir/h} u) \|_{\sL^2(K)} \leq
\|\re^{\Phir/h} h D_{s} u \|_{\sL^2(K)} + \| \Phir' \re^{\Phir/h} u  \|_{\sL^2(K)}.
\end{equation*}
Using \eqref{eq.unif1} in $\sL^2(K)$, the fact that $|\Phir'|^2=V$, $V$ is bounded and \eqref{eq.hDSexpu8}, we infer
\begin{equation}\label{eq.hDSexpu9}
\left\| h D_{s} \left(\re^{\Phir /h} u \right) \right\|_{\sL^2(K)} = {\cal O}(h^\infty).
\end{equation}
We end up with the $\sL^\infty(K)$ estimate in  \eqref{eq.unif2}. From \eqref{eq.lhrlambda} restricted to $K$, \eqref{eq.PhirtPhirK} and \eqref{eq.differencetphiphi}, we have
\begin{equation} \label{eq.lhrlambda2}
\| \re^{\Phir /h}(\tLr -\lar)\BKWr\|_{\sL^2(K)}^2 = {\cal O}(h^\infty).
\end{equation}
Since $\Pi_{\ri}\BKWr$ is an eigenfunction, we get
$$
\left\| \re^{\Phir/h}\left(\tLr-\lar\right) u\right\|_{\sL^2(K)}^2
=\left\| \re^{\Phir/h}\left(\tLr-\lar\right)(\BKWr -\Pi_{\ri}\BKWr )\right\|_{\sL^2(K)}^2
= {\cal O}(h^\infty).
$$
By definition of $\tLr$, this provides
$$\left\| \re^{\Phir/h}\left(h^2D_{s}^2+V(s)-\lar\right)u\right\|_{\sL^2(K)}^2 = {\cal O}(h^\infty).$$
Thanks to \eqref{eq.unif1} in $\sL^2(K)$ and since $\lar=\mathcal{O}(h)$ and $V$ is bounded, we infer
\begin{equation} \label{eq.Dsint}
\left\| \re^{\Phir/h}h^2D_{s}^2u\right\|_{\sL^2(K)}= {\cal O}(h^\infty).
\end{equation}
We have
\begin{align} \label{eq.sumDS2}
 (h^2D_{s}^2) \left(\re^{\Phir/h}u\right)
& =\re^{\Phir/h}(h^2D_{s}^2)u + \left[h^2 D_{s}^2,\re^{\Phir/h}\right]u,
\end{align}
where
\begin{equation}\label{eq.commut}
\left[h^2 D_{s}^2,\re^{\Phir/h}\right]u
= -\re^{\Phir/h}\left(2ih \Phir' D_{s}u +|\Phir'|^2 u +h \Phir'' u \right).
\end{equation}
Since $\Phir'$ and $\Phir''$ are bounded functions, we can estimate each term in \eqref{eq.commut} thanks to the $\sL^2(K)$ estimate given in  \eqref{eq.unif1} and \eqref{eq.unif2}  and we get
\begin{equation} \label{eq.commutateur}
 \left\| \left[h^2 D_{s}^2,\re^{\Phir/h}\right]u\right\|_{\sL^2(K)} = {\cal O}(h^\infty).
\end{equation}
From \eqref{eq.Dsint}, \eqref{eq.commutateur} and \eqref{eq.sumDS2}, we get the following estimate
\begin{equation} \label{eq.DSext}
 \| h^2 D_{s}^2(\re^{\Phir/h} u) \|_{\sL^2(K)} = {\cal O}(h^\infty).
\end{equation}
From Sobolev embedding, we deduce from \eqref{eq.DSext} and \eqref{eq.hDSexpu9} that
\begin{equation} \label{eq.DSlinf}
 \| h D_{s}(\re^{\Phir/h} u) \|_{\sL^\infty(K)} = {\cal O}(h^\infty).
\end{equation}
Now doing again the commutation between $hD_s$ and $\re^{\Phir/h}$ gives
\begin{equation} \label{eq.DSlinf2}
 \|\re^{\Phir/h} h D_{s} u \|_{\sL^\infty(K)}  \leq \| h D_{s}(\re^{\Phir/h} u) \|_{\sL^\infty(K)} + \| \Phir' \re^{\Phir/h} u \|_{\sL^\infty(K)}.
\end{equation}
Using then \eqref{eq.DSlinf} for the term with the derivative, the fact that $\Phir'$ is bounded and \eqref{eq.unif1} in the $\sL^\infty(K)$ sense, we get
\begin{equation} \label{eq.DSlinf3}
 \|\re^{\Phir/h} h D_{s} u \|_{\sL^\infty(K)}  = {\cal O}(h^\infty).
\end{equation}
The proof of the $\sL^\infty(K)$ estimate in \eqref{eq.unif2} is complete, and so is the proof of Proposition~\ref{uniform}.
\end{proof}
\begin{remark} \label{minuit}
The estimate given by Proposition \ref{uniform} is crucial and will be used in particular to get an estimate at the points $\pm \pi/2$ in Section \ref{Sec.Comput}.
\end{remark}

\subsection{From one well to the other}
In this section we explain how to transfer the informations for the well configuration $s=0$ to the one of $s=\pi$. In the following we index by $\le$ the quantities, operators, quasimodes, etc. related to the left-hand side well whose coordinate is $s=\pi$.\\
Let $\Ble{\rho}:=\cB(\pi,\rho)= (\pi-\rho,\pi+\rho),$ for any $\rho\in(0,\pi)$.
The Dirichlet realization of $(hD_{s}+\xi_{0})^2+V(s)$ on $\sL^2(\Iel,\dx s)$ is denoted $\Ll$. \\
Let us consider the transform $U$ defined by
\begin{equation}\label{U}
U(f)(s) = \overline{f(\pi-s)}.
\end{equation}
For any $\rho\in(0,\pi]$, the application $U$ defines an anti-hermitian unitary transform from $\sL^2(\Bri{\rho},\dx s)$ onto $\sL^2(\Ble{\rho},\dx s)$. According to Assumption \ref{V} about the symmetry of $V$, the two operators $\Lr$ and $\Ll$ are unitary equivalent:
\begin{equation}\label{eq.conj-op}
\Ll=U\Lr U^{-1}.
\end{equation}
Thus they have the same spectrum and $\lar$ is the first common eigenvalue. The eigenfunctions of $\Ll$ are obviously deduced from those of $\Lr$ thanks to the unitary transform $U$. We let $\tphil=U\tphir$. Then the function $\tphil$ is a positive $\sL^2$-normalized eigenfunction of $\tLl$ (the Dirichlet realization of $h^2D_{s}^2+V$ on $\sL^2(\Iel,\dx s)$) associated with $\lar$. Thus we have
$$\tLl \tphil = (h^2 D_{s}^2+V)\tphil = \lar \tphil\qquad \mbox{ on }\Iel.$$
The function $\phil$ defined on $\Iel$ by
\begin{equation}\label{def.phil}
\phil=U\phir,
\end{equation}
is an eigenfunction of $\Ll$ associated with $\lar$ and satisfies
\begin{equation}\label{phil}
\phil(s) = \re^{i\frac{\xi_{0} \pi}h} \re^{-i\frac{\xi_{0} s}h} \tphil(s),\qquad \forall s\in \Iel.
\end{equation}

\section{Double wells and interaction matrix}\label{Sec.doublewell}
\subsection{Estimates of Agmon}
In this section, we discuss the estimates of Agmon in the double well situation. These global estimates have a similar proof as in Proposition \ref{Agmon}. From now on, $\Phi$ will denote the global Agmon distance
$$
\Phi(s) = \min (\Phir(s), \Phi_{\le}(s)),
$$
with the Agmon distances defined as in \eqref{Phi0} by
\begin{equation}\label{eq.PhirPhil}
\Phir(s)=\int_{[0,s]} \sqrt{V(\sigma)} \dx \sigma, \; \forall s\in \Bri{\pi}
\quad\mbox{ and }\quad
\Phi_{\le}(s)=\int_{[\pi,s]} \sqrt{V(\sigma)} \dx \sigma, \;\forall s\in \Ble{\pi}.
\end{equation}
The function $\Phi$ is Lipschitzian and satisfies the eikonal equation $|\Phi'|^2=V$.
\begin{proposition}\label{Agmong} Let us consider the $\rho$-neighborhood of the wells on $\S1$ identified with $\R/ 2\pi \Z$
$$
\widehat\cB(\rho) = \Bri{\rho} \cup \Ble{\rho}.
$$
For all $\varepsilon \in(0,1)$, $C_{0}>0$, there exist positive constants $h_{0}, A, c, C$ such that, for all $h\in (0,h_{0})$, $z\in[0,C_{0}h]$ and $u\in {\cal C}^\infty (\S1)$,
\begin{equation}\label{Agmong-L2}
ch\| \re^{\sqrt{1-\varepsilon }\Phi/h}u\|_{\sL^2(\S1)}
\leq\| \re^{\sqrt{1-\varepsilon })\Phi/h}(\Lh -z)u\|_{\sL^2(\S1)}+ Ch\|u\|_{\sL^2(\widehat\cB(Ah^{1/2}))},
\end{equation}
and
\begin{equation}\label{Agmong-H1}
\left\| (h D_{s} + \xi_0) \left(\re^{\sqrt{1-\varepsilon }\Phi/h} u\right)\right\|^2_{\sL^2(\S1)}
\leq \frac{C}{h}\| \re^{\sqrt{1-\varepsilon }\Phi/h}(\Lh -z)u\|^2_{\sL^2(\S1)}+ Ch\|u\|_{\sL^2(\widehat\cB(Ah^{1/2}))}^2.
\end{equation}
\end{proposition}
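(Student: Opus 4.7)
The plan is to mimic the proof of Proposition~\ref{Agmon} on the whole circle, taking advantage of the fact that $\xi_0$ is a real constant, so that $P := hD_s + \xi_0$ is self-adjoint on $\sL^2(\S1)$ and, as a differential operator, shares the commutation properties of $hD_s$ with any multiplication operator. The circulation $\xi_0$ will therefore enter only through the self-adjoint realization of $P$ and drop out of every commutator.

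The first step is to apply Lemma~\ref{localization} with $P = hD_s + \xi_0$ and $Q = \re^{\sqrt{1-\varepsilon}\,\Phi/h}$. Since $\Phi$ is Lipschitz on $\S1$, $Q$ is a bounded multiplication operator, and one checks that $[P,Q] = -i\sqrt{1-\varepsilon}\,\Phi' Q$ and $[[P,Q],Q]=0$. Combined with an integration by parts over the closed manifold $\S1$ (no boundary terms appear), this yields
$$\|PQu\|^2 + \int_{\S1}\bigl(V - (1-\varepsilon)|\Phi'|^2\bigr)|Qu|^2\dx s = \Re \langle \Lh u, Q^2 u\rangle.$$

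The second step exploits the eikonal identity $|\Phi'|^2 = V$, valid almost everywhere on $\S1$ because $\Phir$ and $\Phi_\le$ each satisfy it on their domain of definition. The integrand then reduces to $\varepsilon V|Qu|^2$. Using the non-degeneracy of $V$ at each well, together with a bound of the form $V(s) \geq c_0\, \dist(s,\{0,\pi\})^2$ near the wells and $V \geq c > 0$ away from them, there exists, for $A$ large enough, a constant $M>0$ proportional to $\varepsilon A^2$ such that $\varepsilon V \geq Mh$ on $\S1\setminus \widehat\cB(Ah^{1/2})$. On the complementary tubes, $\Phi/h$ is uniformly bounded so $Q$ is bounded there. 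Cauchy--Schwarz on the right-hand side and the splitting of the integral as in \eqref{eq:Agmon} then give
$$\|PQu\|^2 + Mh\,\|Qu\|^2_{\sL^2(\S1)} \leq \|Q \Lh u\|\,\|Qu\| + Ch\,\|u\|^2_{\sL^2(\widehat\cB(Ah^{1/2}))}.$$

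The third step handles the spectral parameter $z\in[0,C_0h]$ by writing $\Lh u = (\Lh-z)u + zu$ and absorbing $z\|Qu\|^2 \leq C_0 h \|Qu\|^2$ on the left-hand side, which is legitimate provided $A$ is chosen so that $M>C_0$. This immediately yields \eqref{Agmong-L2}, and re-injecting that bound into the inequality above (exactly as at the end of the proof of Proposition~\ref{Agmon}) produces the $\sH^1$-type estimate \eqref{Agmong-H1}. The main technical point — mild but worth verifying — is that the patched weight $\Phi=\min(\Phir,\Phi_\le)$ is globally Lipschitz on $\S1$ and satisfies the eikonal equation almost everywhere, which follows from the smoothness of $\sqrt{V}$ away from the wells together with the vanishing of $\Phir$ and $\Phi_\le$ at their respective minima; once this is settled, the argument is a direct globalization of the one-well case.
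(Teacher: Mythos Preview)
Your proof is correct and follows essentially the same approach as the paper: both apply Lemma~\ref{localization} with $P=hD_s+\xi_0$ and $Q=\re^{\sqrt{1-\varepsilon}\,\Phi/h}$, reduce the potential term to $\varepsilon V$ via the eikonal identity, and then split near/away from the wells using the non-degeneracy of $V$ exactly as in Proposition~\ref{Agmon}. Your write-up is in fact slightly more explicit about the commutator structure and the Lipschitz regularity of the patched weight $\Phi$, but the argument is identical in substance.
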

\begin{proof}
For $\varepsilon \in(0,1)$, we let $\Phi_{\varepsilon }=\sqrt{1-\varepsilon }\Phi$. We apply Lemma \ref{localization} with $P = hD_s +\xi_0$, $Q= \re^{\Phi_{\varepsilon }/h}$, and use that $\Phi$ is Lipschitzian. After an integration by parts, we obtain
\[
\Re\int_{\S1} (hD_s+ \xi_0)^2 u\, \re^{2\Phi_{\varepsilon }/h}u\dx s
= \int_{\S1} \left|\left(h D_{s}+\xi_0\right) \left(\re^{\Phi_{\varepsilon }/h}u\right)\right|^2\dx s
- \int_{\S1} |\Phi_{\varepsilon }'|^2 \re^{2\Phi_{\varepsilon }/h} |u|^2\dx s.
\]
Adding the electric potential $V$ and recalling that $\Lh = (hD_s+ \xi_0)^2 + V$, we get
\begin{align*}
\int_{\S1} \left|(h D_{s}+\xi_0)\left(\re^{\Phi_{\varepsilon }/h}u\right)\right|^2\dx s
+\int_{\S1} \left(V-|\Phi_{\varepsilon }'|^2\right) \re^{2\Phi_{\varepsilon }/h} |u|^2\dx s
&= \Re\int_{\S1}\Lh u\, \re^{2\Phi_{\varepsilon }/h}u\dx s \\
&\leq \left\| \re^{\Phi_{\varepsilon }/h}\Lh u\right\|  \left\| \re^{\Phi_{\varepsilon }/h}u\right\|,
\end{align*}
so that
$$
\int_{\S1} \left|(hD_{s}+ \xi_0)\left(\re^{\Phi_{\varepsilon }/h}u\right)\right|^2\dx s+\int_{\S1} \varepsilon  V \re^{2\Phi_{\varepsilon }/h}{|u|^2}\dx s
\leq \left\| \re^{\Phi_{\varepsilon }/h}\Lh u\right\|  \left\| \re^{\Phi_{\varepsilon }/h}u\right\|.
$$
The rest of the proof is identical to the one of Proposition \ref{Agmon}, using again the non degeneracy of the minima of $V$ at $s=0$ and $s=\pi$ as in the proof of Proposition~\ref{prop.poids}. Then we get \eqref{Agmong-H1}.
\end{proof}
As a direct consequence of Proposition \ref{Agmong} with $u = \varphi$ and $z= \lambda$, we get 
\begin{corollary} \label{csqagmon}
For all $\varepsilon \in(0,1)$, there exist $C>0$ and $h_{0}>0$ such that, for $h\in(0,h_{0})$ and $\varphi$ an eigenfunction of $\Lh$ associated with $\lambda={\cal O}(h)$,
$$
\|\re^{\sqrt{1-\varepsilon }\Phi/h} \varphi\|_{\sL^2(\S1)}\leq C\|\varphi\|_{\sL^2(\S1)} \qquad\mbox{ and}\qquad
\|h D_s (\re^{\sqrt{1-\varepsilon }\Phi/h} \varphi )\|_{\sL^2(\S1)} \leq C\|\varphi\|_{\sL^2(\S1)}.
$$
\end{corollary}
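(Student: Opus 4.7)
The plan is to apply Proposition~\ref{Agmong} directly with the test function $u=\varphi$ and the spectral parameter $z=\lambda$. Since $\varphi$ is an eigenfunction, we have $(\Lh-\lambda)\varphi=0$, so the first term on the right-hand side of both \eqref{Agmong-L2} and \eqref{Agmong-H1} vanishes identically. Also, by hypothesis $\lambda=\mathcal{O}(h)$, so choosing $C_0$ large enough we can indeed place $\lambda$ in the interval $[0,C_0 h]$ once we know $\lambda\geq 0$ (which follows from the non-negativity of $\Lh$, since $V\geq 0$); if one wants to dispense with positivity, an obvious shift of the operator absorbs a constant into $C_0$.

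First I would establish the $\sL^2$ estimate. From \eqref{Agmong-L2} with $u=\varphi$ and $z=\lambda$, we obtain directly
\[
ch\,\|\re^{\sqrt{1-\varepsilon}\,\Phi/h}\varphi\|_{\sL^2(\S1)}\leq Ch\,\|\varphi\|_{\sL^2(\widehat\cB(Ah^{1/2}))}\leq Ch\,\|\varphi\|_{\sL^2(\S1)},
\]
since $\widehat\cB(Ah^{1/2})\subset\S1$. Dividing by $ch$ yields the first bound with a new constant $C/c$.

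Next I would treat the $\sH^1$-type bound. Applying \eqref{Agmong-H1} in the same way gives
\[
\bigl\|(hD_s+\xi_0)\bigl(\re^{\sqrt{1-\varepsilon}\,\Phi/h}\varphi\bigr)\bigr\|_{\sL^2(\S1)}^2\leq Ch\,\|\varphi\|_{\sL^2(\S1)}^2.
\]
To convert this into a bound on $hD_s(\re^{\sqrt{1-\varepsilon}\,\Phi/h}\varphi)$, I use the triangle inequality
\[
\|hD_s(\re^{\sqrt{1-\varepsilon}\,\Phi/h}\varphi)\|_{\sL^2(\S1)}\leq \|(hD_s+\xi_0)(\re^{\sqrt{1-\varepsilon}\,\Phi/h}\varphi)\|_{\sL^2(\S1)}+|\xi_0|\,\|\re^{\sqrt{1-\varepsilon}\,\Phi/h}\varphi\|_{\sL^2(\S1)},
\]
and control the second term using the first estimate already proved. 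Collecting all constants yields the second bound.

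There is no real obstacle here: the corollary is a direct consequence of the weighted a priori inequalities established for $\Lh$ in Proposition~\ref{Agmong}, together with the eigenvalue equation which kills the source term. The only minor technical point is the replacement of $hD_s+\xi_0$ by $hD_s$, handled by the triangle inequality and the first estimate.
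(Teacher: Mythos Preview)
Your argument is correct and follows the same approach as the paper, which simply states the corollary as a direct consequence of Proposition~\ref{Agmong} with $u=\varphi$ and $z=\lambda$. You have supplied the details the paper omits, including the triangle inequality to pass from $(hD_s+\xi_0)$ to $hD_s$; this is exactly the intended proof.
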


\subsection{Rough estimates on the spectrum}
The main purpose of this article is to get an exponentially precise description of the lowest eigenvalues of $\Lh$. For this we use the one well unitary equivalent operators $\Lr$ and $\Ll$ defined respectively on $\Ier$ and $\Iel$. Let us consider the quadratic approximation of $\tLr$ defined on $\R$ by
$$
h^2D_{s}^2 + \frac{1}{2} V''(0) s^2.
$$
From a direct and standard analysis, we know that its spectrum is discrete, made of the simple eigenvalues $(2j+1) \kappa h$ for  $j \in \mathbb{N}$. In particular,  $\kappa h$ is a single eigenvalue in the interval $\Ih=(-\infty, 2 \kappa h )$. By quadratic approximation, we know that for any fixed  $\eta$, $\Lr$ has only a single eigenvalue $\lar$ in $\Ih $ satisfying
\begin{equation}\label{eq.lhkappa}
\lar = \kappa h + {\cal O}(h^{3/2}),
\end{equation}
since the eigenvalues are of type
\begin{equation}\label{eq.lhkappa2}
 (2j+1) \kappa h + {\cal O}(h^{3/2}), \qquad j \geq 0.
\end{equation}
In order to estimate the first two eigenvalues of the full operator $\Lh$ on $\S1$, which will appear to be very close to $\lar$ and the only ones in $\Ih$, we need to write the matrix of $\Lh$ on an appropriate invariant two dimensional subspace. For this we need to extend on $\S1$ the quasimodes built in the simple well cases.
\begin{notation} \label{notation} 
We will use the following conventions and notation:
\begin{enumerate}[(i)]
\item We identify functions on $\S1$ and $2\pi$-periodic functions of the variable $s \in \R$. We also extend by $0$ on $\S1 \setminus\Ier$ the functions $\chir$ and  $\phir$ and by $0$ on $\S1 \setminus\Iel$ the functions $\chil$ and  $\phil$. 
\item We index by $\alpha$ and $\beta$ the points $\ri$ and $\le$, and identify $\ri$ with $0$ and $\le$ with $\pi$ on $\S1$. For convenience, we also denote by $\bar{\alpha}$ the complement of $\alpha$ in $\{\ri,\le\}$.
\item for a given function $f$, we say that a function is ${\widetilde{\cal O}}(\re^{-f/h})$ if, for all $\varepsilon>0$, $\eta > 0$, it is ${\cal O}(\re^{(\varepsilon + \gamma(\eta) -f)/h})$, where $\lim_{\eta \rightarrow 0}\gamma(\eta) =0$ (see \cite{HelSj84,HelSj85,DiSj99}).
\end{enumerate}
\end{notation}

\begin{definition}
We introduce two quasimodes $\fr$ and $\fl$ defined on $\S1$ by
\begin{equation} \label{extension}
 \fr =\chir \phir
\qquad\mbox{ and }\qquad
\fl=\chil \phil,
\end{equation}
with
\begin{equation}\label{def.chil}
\chil=U\chir.
\end{equation}
\end{definition}
We have in particular $\fl=U\fr$. Since we want to compare the operators $\Lh$ and $\La$, we first compute $\Lh \fa$. 
\begin{lemma} \label{lemresta}
Let us denote, for $\alpha \in \{\le, \ri\}$,
\begin{equation} \label{defresta}
 \resta = (\Lh -\lar) \fa = (\La -\lar) \chia \phia = [\La, \chia] \phia.
\end{equation}
For $\eta$ sufficiently small, we have
\begin{enumerate}[{\rm(i)}]
\item\label{lemrestai} $\resta(s) = {\widetilde{\cal O}}( \re^{- \sfS /h})$,
\item\label{lemrestaibis} $\langle \resta,\fa\rangle = {\widetilde{\cal O}}( \re^{-2\sfS /h})$ and $\langle \resta,\fb\rangle = {\widetilde{\cal O}}( \re^{-\sfS /h})$  for $\alpha\neq \beta$,
\item\label{lemrestaii}$\langle \fa,\fa\rangle = 1 + {\widetilde{\cal O}}( \re^{-2\sfS /h})$
and $\langle \fa,\fb\rangle = {\widetilde{\cal O}}( \re^{-\sfS /h})$  for $\alpha\neq \beta$,
\item Let us introduce the finite dimensional  vectorial space ${\mathcal F} = \mathsf{span} \{\fr, \fl\}$.
Then, for $h$ small enough, $ \dim {\mathcal F} = 2$.
\end{enumerate}
\end{lemma}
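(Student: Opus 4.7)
The plan is to establish (i)--(iv) sequentially by combining the explicit form of $\resta$ with the one-well Agmon estimates of Section~\ref{Sec.simplewell}. I first justify \eqref{defresta}: since $\fa = \chia \phia$ is supported in $\Iea$ and $\La \phia = \lar \phia$ on $\Iea$, the identity $\Lh \fa = \La \fa$ and the Leibniz rule give $\Lh \fa = \lar \chia\phia + [\La,\chia]\phia$, hence $\resta = [\La,\chia]\phia$. As $\chia$ commutes with the scalars $V$ and $\xi_{0}$,
\[
[\La,\chia] = [(hD_s+\xi_0)^2,\chia] = -2ih\,\chia'(hD_s+\xi_0) - h^2\chia'',
\]
so $\resta$ is supported in $\supp\chia' \subset \Iea\setminus\cB_\alpha(\pi-2\eta)$. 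By continuity of the Agmon distance $\Phi_\alpha$ and the fact that $V$ vanishes only at the wells, on this set one has $\Phi_\alpha(s) \geq \sfS - \gamma(\eta)$ with $\gamma(\eta)\to 0$ as $\eta\to 0$.

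For (i), the goal is pointwise control of $\phia$ and $(hD_s+\xi_0)\phia$ on $\supp\chia'$. By the gauge transforms \eqref{def.phir} and \eqref{phil}, the magnitudes of $\phia$ and of $(hD_s+\xi_0)\phia$ coincide with those of the corresponding pure electric eigenfunction and its derivative, so Proposition~\ref{Agmon} applied with the rough weight $\sqrt{1-\varepsilon}\Phi_\alpha$ of Proposition~\ref{prop.poids}~\eqref{poids1}, combined with the one-dimensional Sobolev embedding $\sH^1\hookrightarrow \sL^\infty$, delivers the pointwise bound $|\phia(s)| + |(hD_s+\xi_0)\phia(s)| \leq C_\varepsilon\,\re^{-\sqrt{1-\varepsilon}\Phi_\alpha(s)/h}$. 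Inserting this into the commutator expression above and using the lower bound on $\Phi_\alpha$ yields $\resta(s) = \widetilde{\mathcal O}(\re^{-\sfS/h})$.

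For (ii), the integrand of $\langle\resta,\fa\rangle$ is supported on $\supp\chia'$ and is the product of two $\widetilde{\mathcal O}(\re^{-\sfS/h})$ factors, whence $\langle\resta,\fa\rangle = \widetilde{\mathcal O}(\re^{-2\sfS/h})$; while $\langle\resta,\fb\rangle = \widetilde{\mathcal O}(\re^{-\sfS/h})$ follows from Cauchy--Schwarz together with $\|\fb\|_{\sL^2} = \mathcal{O}(1)$ and the bound $\|\resta\|_{\sL^2}=\widetilde{\mathcal O}(\re^{-\sfS/h})$ implied by (i) and the compactness of $\supp\chia'$. For (iii), the normalization $\|\phia\|_{\sL^2}=1$ gives $\|\fa\|^2 = 1 - \int(1-\chia^2)|\phia|^2\,ds$, reducing the $\widetilde{\mathcal O}(\re^{-2\sfS/h})$ bound to the Agmon $\sL^2$ estimate on $\supp(1-\chia^2) \subset \Iea \setminus \cB_\alpha(\pi-2\eta)$. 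For the cross term $\langle\fa,\fb\rangle$, the integrand is supported in $\supp\chia \cap \supp\chib$, a union of arcs on each of which $\Phi_\alpha(s) + \Phi_\beta(s)$ equals $\sfS_\up$ or $\sfS_\dow$, hence is $\geq \sfS$; combining the Agmon pointwise bounds on $|\phia|$ and $|\phib|$ then yields $\langle\fa,\fb\rangle = \widetilde{\mathcal O}(\re^{-\sfS/h})$. Finally, (iv) is an immediate consequence of (iii): the Gram matrix of $(\fr,\fl)$ equals $I + \widetilde{\mathcal O}(\re^{-\sfS/h})$, so it is invertible for $h$ small enough and $\dim\mathcal{F} = 2$.

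The main technical point to monitor is that both the factor $\sqrt{1-\varepsilon}$ from the rough Agmon weight and the cutoff loss $\gamma(\eta)$ must be absorbed by the $\widetilde{\mathcal O}$ notation; this is precisely what Notation~\ref{notation}(iii) permits, by letting $\varepsilon$ and $\eta$ tend to $0$ independently to recover the sharp exponents $\sfS$ and $2\sfS$.
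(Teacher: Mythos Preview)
Your proof is correct and follows essentially the same route as the paper's: exploit the commutator expression for $\resta$, localize to $\supp\chia'$, and apply the one-well Agmon estimates with the rough weight $\sqrt{1-\varepsilon}\,\Phi_\alpha$ to obtain the exponential decay, absorbing both the $\varepsilon$- and $\eta$-losses into the $\widetilde{\cal O}$ notation. If anything, your citation of Proposition~\ref{Agmon} with Proposition~\ref{prop.poids}~\eqref{poids1} is more precise than the paper's reference to Corollary~\ref{csqagmon} (which concerns the global operator $\Lh$ rather than the one-well operator $\La$), and your use of the eikonal identity $\Phi_\alpha+\Phi_\beta\in\{\sfS_\up,\sfS_\dow\}$ on each arc makes the cross-term estimate in (iii) completely explicit; the only step you leave implicit is the passage from the $\sH^1$ Agmon bound to a pointwise bound on $(hD_s+\xi_0)\phia$, which follows in one more line from the eigenvalue equation.
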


\begin{proof}
\begin{enumerate}[{\rm(i)}]
\item Thanks to Corollary \ref{csqagmon}, we get in $\sL^\infty(\S1)$ and $\sL^2(\S1)$ sense that, for all $\varepsilon >0$,
$$
\re^{ \sqrt{1-\varepsilon }\Phi_\alpha(s)/h}\resta(s) = {\cal O}(1).
$$
Since the support of $[\La, \chia]$ is included in $\cB_{\bar{\alpha}}( 2\eta)$, we get: 
\begin{equation} \label{estimrest0}
\resta(s) = \widetilde{{\cal O}}( \re^{- \sfS/h}).
\end{equation}
\item is a consequence of \eqref{lemrestai} and the location of the support of $\resta$.
\item We first recall, from Proposition \ref{Agmon} and Proposition \ref{prop.poids} \eqref{poids1}, that
\begin{equation}\label{Agmon-r}
\phia = {\widetilde{\cal O}}( \re^{- \Phi_\alpha/h}),
\end{equation}
in $\sL^2(\Iea)$ and $\sH^1(\Iea)$. According to Agmon estimates, this gives in particular 
\begin{equation} \label{fafb}
\langle \fa,\fa\rangle = 1 + {\widetilde{\cal O}}( \re^{-2\sfS /h}).
\end{equation}
For $\alpha \neq \beta$, using \eqref{Agmon-r}, the supports of $\chia$ and $\chib$ and since $\Phi_\alpha+\Phi_\beta \geq \sfS$, we get
$$\langle \fa,\fb\rangle = {\widetilde{\cal O}}( \re^{-\sfS /h}).$$
\item The previous estimates imply that $\dim{\mathcal F}=2$ for $h$ small enough.
\end{enumerate}
\end{proof}
In the following series of lemmas, we show that the first two eigenvalues are exponentially close to $\lar$ and are the only ones in $\Ih $.
\begin{lemma}
Let us define ${\mathcal G}=\mathrm{range}\left(\mathds{1}_{\Ih}(\Lh)\right)$. Then
$\dist( \Sp(\Lh), \lar) = {\widetilde{\cal O}}(\re^{-\sfS /h})$ and ${\mathsf{dim}}\ {\mathcal G} \geq 2$.
\end{lemma}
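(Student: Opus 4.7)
The plan is to prove both statements by standard spectral-theoretic arguments using the quasimodes $\fr$ and $\fl$ that Lemma~\ref{lemresta} provides, together with the rough asymptotic $\lar = \kappa h + \mathcal{O}(h^{3/2})$ from \eqref{eq.lhkappa}. The fact that $\Lh$ restricted to the orthogonal complement of $\mathcal G$ is bounded below by $2\kappa h$ is what turns ``good'' quasimodes into actual eigenvectors (up to an exponentially small error), and this is exactly what forces $\dim\mathcal G \geq 2$.

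For the first assertion $\dist(\Sp(\Lh),\lar)=\widetilde{\mathcal O}(\re^{-\sfS/h})$, I would simply invoke the spectral theorem applied to $\fa$ for $\alpha\in\{\ri,\le\}$. By Lemma~\ref{lemresta}~\eqref{lemrestai} and the compactness of $\S1$, the residual satisfies $\|\resta\|_{\sL^2(\S1)} = \widetilde{\mathcal O}(\re^{-\sfS/h})$, while Lemma~\ref{lemresta}~\eqref{lemrestaii} gives $\|\fa\|_{\sL^2(\S1)} \geq 1/2$ for $h$ small. Hence
$$
\dist\bigl(\Sp(\Lh),\lar\bigr) \leq \frac{\|(\Lh-\lar)\fa\|_{\sL^2(\S1)}}{\|\fa\|_{\sL^2(\S1)}} = \widetilde{\mathcal O}(\re^{-\sfS/h}).
$$

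For the second assertion, let $\Pi := \mathds{1}_{\Ih}(\Lh)$ be the spectral projector onto $\mathcal G$. My plan is to show that the two vectors $\Pi\fr$ and $\Pi\fl$ remain linearly independent (which immediately gives $\dim\mathcal G \geq 2$) by showing that $\Pi\fa$ differs from $\fa$ only by an exponentially small quantity. On the range of $I-\Pi$ the operator $\Lh$ is bounded below by $2\kappa h$, so using \eqref{eq.lhkappa} we obtain $2\kappa h - \lar \geq \kappa h/2$ for $h$ small enough. Since $\Lh-\lar$ commutes with $\Pi$, this spectral gap yields
$$
\|(I-\Pi)\fa\|_{\sL^2(\S1)} \leq \frac{2}{\kappa h}\,\|(I-\Pi)(\Lh-\lar)\fa\|_{\sL^2(\S1)} \leq \frac{2}{\kappa h}\,\|\resta\|_{\sL^2(\S1)} = \widetilde{\mathcal O}(\re^{-\sfS/h}),
$$
where the polynomial loss $h^{-1}$ is absorbed into the $\widetilde{\mathcal O}$ symbol (by the $\varepsilon$-flexibility in Notation~\ref{notation}~(iii)). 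Consequently $\Pi\fa = \fa + \widetilde{\mathcal O}(\re^{-\sfS/h})$ in $\sL^2(\S1)$, and Lemma~\ref{lemresta}~\eqref{lemrestaii} then shows that the Gram matrix of $(\Pi\fr,\Pi\fl)$ equals the identity up to a $\widetilde{\mathcal O}(\re^{-\sfS/h})$ perturbation. It is therefore invertible for $h$ small, so $\Pi\fr$ and $\Pi\fl$ are linearly independent elements of $\mathcal G$.

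There is no real obstacle here: the argument is a textbook quasimode-to-projector reduction. The only point requiring minor care is establishing the lower bound $2\kappa h - \lar \geq \kappa h/2$ on $\ker\Pi$, which relies on the rough one-well eigenvalue asymptotics~\eqref{eq.lhkappa} together with the definition $\Ih=(-\infty,2\kappa h)$; once this is noted, all exponential estimates are inherited directly from Lemma~\ref{lemresta}.
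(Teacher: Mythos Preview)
Your proof is correct and follows essentially the same approach as the paper. The paper's own proof is very terse: it simply records that $\|(\Lh-\lar)u\|=\widetilde{\mathcal O}(\re^{-\sfS/h})\|u\|$ for all $u\in\mathcal F$ and invokes the spectral theorem together with $\dim\mathcal F=2$, leaving the reader to unpack the implication $\dim\mathcal G\geq 2$; you have spelled out precisely this unpacking via the spectral gap $2\kappa h-\lar\geq\kappa h/2$ on $\mathcal G^\perp$ and the resulting bound on $(I-\Pi)\fa$. Your more explicit argument in fact anticipates the paper's later Lemma~\ref{vafa}, but the underlying mechanism is identical.
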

\begin{proof}
This is a consequence of the spectral theorem. Indeed, using Lemma \ref{lemresta}, we get
$$\forall u\in {\mathcal F},\qquad \|(\Lh-\lar)u\|={\widetilde{\cal O}}( \re^{-\sfS /h})\|u\|.$$
This achieves the proof since $\dim{\mathcal F}=2$.
\end{proof}
Now we can prove the following.
\begin{lemma}\label{lem.SpLh}
We have
\begin{enumerate}[\rm(i)]
\item\label{specintermediaire} $ \langle (\Lh - \lar) u,u\rangle \geq \kappa h \|u\|^2$,  for all $u \in {\mathcal G}^\perp$,
\item\label{dimG} $\dim{\mathcal G} = 2$,
\item\label{SpLh} $ \Sp(\Lh) \cap \Ih  \subset [\lar- {\widetilde{\cal O}}(\re^{-\sfS /h}), \lar + {\widetilde{\cal O}}(\re^{-\sfS /h})]$.
\end{enumerate}
\end{lemma}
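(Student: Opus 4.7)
My approach is to derive all three items from a single technical estimate: an IMS-type quadratic form lower bound on $\mathcal{F}^\perp$. From this I plan to extract (ii) via the min-max principle, then (i) as a direct corollary of the spectral theorem combined with $\lar = \kappa h + O(h^2)$, and finally (iii) by a spectral projection argument using (i) and Lemma \ref{lemresta}. The essential input is the harmonic-oscillator spectral gap of the one-well operators $\La$: by the expansion \eqref{eq.lhkappa2}, $\lar = \kappa h + O(h^{3/2})$ is simple and the next eigenvalue of $\La$ is $3\kappa h + O(h^{3/2})$.

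Concretely, I fix a smooth partition of unity $\tilde\chi_\ri^2 + \tilde\chi_\le^2 + \tilde\chi_0^2 = 1$ on $\S1$, with $\tilde\chi_\alpha$ (for $\alpha\in\{\ri,\le\}$) supported in $\cB_\alpha(\pi-\eta)$ and equal to $1$ on the support of the quasimode cutoff $\chia$, and $\tilde\chi_0$ supported in the region where $V \geq c > 0$. The IMS formula applied to $\Lh$ gives
\[
\langle \Lh u, u\rangle = \sum_{\alpha\in\{\ri,\le,0\}} \langle \Lh (\tilde\chi_\alpha u), \tilde\chi_\alpha u\rangle - h^2 \sum_\alpha \|\tilde\chi_\alpha' u\|^2.
\]
On $\supp\tilde\chi_0$ one has $\Lh \geq V \geq c > 3\kappa h$ for $h$ small. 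For $\alpha\in\{\ri,\le\}$, $\tilde\chi_\alpha u$ extends as a Dirichlet function on $\cB_\alpha(\pi-\eta)$, so the quadratic form of $\Lh$ equals that of $\La$, and the spectral gap of $\La$ yields
\[
\langle \La v, v\rangle \geq \lar |\langle v,\phia\rangle|^2 + (3\kappa h - o(h))\bigl(\|v\|^2 - |\langle v,\phia\rangle|^2\bigr).
\]
For $u \in \mathcal{F}^\perp$ the crucial step is to show that $|\langle \tilde\chi_\alpha u, \phia\rangle| = \widetilde{\mathcal O}(\re^{-\sfS/h})\|u\|$: writing
\[
\langle \tilde\chi_\alpha u, \phia\rangle = \langle u, \tilde\chi_\alpha \phia\rangle = \langle u, \fa\rangle + \langle u, (\tilde\chi_\alpha - \chia)\phia\rangle,
\]
the first term vanishes by hypothesis, while $(\tilde\chi_\alpha - \chia)\phia$ is supported away from the well, where the Agmon estimates (Proposition \ref{Agmon} with Proposition \ref{prop.poids}\,(a)) give $\phia = \widetilde{\mathcal O}(\re^{-\sfS/h})$. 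Combining the three localized contributions together with $h^2\sum\|\tilde\chi_\alpha' u\|^2 = O(h^2)\|u\|^2$ yields
\[
\langle \Lh u, u\rangle \geq (3\kappa - o(1)) h \|u\|^2, \qquad u \in \mathcal{F}^\perp.
\]

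The remaining deductions are relatively soft. By min-max, $\mathcal{F}^\perp$ having codimension $2$ forces $\lambda_3(h) \geq (3\kappa - o(1))h > 2\kappa h$ for $h$ small, hence $\dim \mathcal{G} \leq 2$; combined with $\dim \mathcal{G} \geq 2$ from the preceding lemma this proves (ii). Then (i) is immediate: for $u \in \mathcal{G}^\perp$ the spectral theorem gives $\langle \Lh u, u\rangle \geq \lambda_3(h)\|u\|^2$, and \eqref{eq.lhkappa} yields $\langle (\Lh - \lar) u, u\rangle \geq ((3\kappa - o(1))h - \kappa h - O(h^{3/2}))\|u\|^2 \geq \kappa h \|u\|^2$ for $h$ small. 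For (iii), (i) provides the resolvent-type bound $\|(\mathrm{Id} - \Pi_{\mathcal G}) v\| \leq (\kappa h)^{-1}\|(\mathrm{Id} - \Pi_{\mathcal G})(\Lh - \lar) v\|$ for any $v$; with $v = \fa$ and using that $\Pi_{\mathcal G}$ commutes with $\Lh$ together with $\|(\Lh - \lar)\fa\| = \|\resta\| = \widetilde{\mathcal O}(\re^{-\sfS/h})$ from Lemma \ref{lemresta}, we obtain $\|(\mathrm{Id} - \Pi_{\mathcal G}) \fa\| = \widetilde{\mathcal O}(\re^{-\sfS/h})$ (the $(\kappa h)^{-1}$ is absorbed in the $\widetilde{\mathcal O}$ convention). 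Hence $\Pi_{\mathcal G}|_{\mathcal F}$ is exponentially close to an isometry onto $\mathcal{G}$, and the eigenvalues $\lambda_1(h), \lambda_2(h)$ of $\Lh|_{\mathcal G}$ agree, up to $\widetilde{\mathcal O}(\re^{-\sfS/h})$, with those of the $2\times 2$ matrix $(\langle \Lh \fa, \fb\rangle)_{\alpha,\beta}$, which by Lemma \ref{lemresta} equal $\lar + \widetilde{\mathcal O}(\re^{-\sfS/h})$. The main obstacle is the IMS quadratic lower bound on $\mathcal{F}^\perp$, and specifically the careful matching of the IMS cutoffs $\tilde\chi_\alpha$ to the quasimode cutoffs $\chia$ so as to extract the full Agmon factor in $|\langle \tilde\chi_\alpha u, \phia\rangle|$; once this estimate is secured, the deduction of (i), (ii), (iii) is a standard combination of min-max and spectral perturbation.
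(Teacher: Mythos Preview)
Your proof is correct and follows essentially the same IMS-localization strategy as the paper: both combine the one-well spectral gap of $\La$ with Agmon decay to show that the projection $\langle \tilde\chi_\alpha u,\phia\rangle$ is exponentially small for $u\in\mathcal F^\perp$, and then feed this into the min-max principle. The paper differs only in implementation: it uses a two-function partition $\tilde\chi_\ri^2+\tilde\chi_\le^2=1$ rather than three, nests its IMS cutoffs \emph{inside} the quasimode cutoffs (introducing an intermediate $\hat\chi_\alpha$ with $\tilde\chi_\alpha\prec\hat\chi_\alpha\prec\chi_\alpha$) rather than outside as you do, and states the quadratic bound directly on $\mathcal F^\perp$ without your explicit passage from $\mathcal F^\perp$ to $\mathcal G^\perp$ via $\lambda_3(h)$ and the spectral theorem. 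Your ordering---(ii) first, then (i)---is arguably cleaner, and your projection argument for (iii) is more detailed than the paper's one-line citation of Lemma~\ref{lemresta}.

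One small technical caveat: asking $\tilde\chi_\alpha$ to be supported in $\cB_\alpha(\pi-\eta)$ \emph{and} equal to $1$ on $\supp\chia$ is only possible if $\supp\chia$ is strictly interior to $\cB_\alpha(\pi-\eta)$. Since the paper allows $\chia$ to be supported on all of $\Ier$, you should either enlarge the one-well Dirichlet domain to $\cB_\alpha(\pi-\eta')$ with $\eta'<\eta$, or stipulate from the outset that $\supp\chia\subset\cB_\alpha(\pi-3\eta/2)$. Either adjustment is harmless and does not affect the argument.
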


\begin{proof}
\begin{enumerate}[\rm(i)]
\item We use again a localization formula and consider a partition of unity $(\widetilde{\chi}_\le,\widetilde{\chi}_\ri)$ such that
$$
\widetilde{\chi}_\le^2 + \widetilde{\chi}_\ri^2 = 1\quad\mbox{ on }\S1,
$$
where $\widetilde{\chi}_\le = U \widetilde{\chi}_\ri$ and $\widetilde{\chi}_\ri$ is supported in $\Bri{3\pi/2}$, equal to 1 in  $\Bri{\pi/2}$. Writing the \enquote{IMS} formula, we deduce that, for $u \in {\mathcal F}^\perp$,
\begin{align*}
\langle (\Lh - \lar) u,u\rangle
& = \sum_{\alpha \in\{\le,\ri\}} \left\langle (\Lh - \lar) \widetilde{\chi}_\alpha u,\widetilde{\chi}_\alpha u\right\rangle + {\cal O}(h^2) \|u\|^2 .
\end{align*}
Let $\Pi_\alpha$ be the orthogonal projection on $\phia$, then
$$
 \widetilde{\chi}_\alpha u - \Pi_\alpha \widetilde{\chi}_\alpha u \in \langle \phia\rangle^\perp.
$$
With $\kappa$ defined in \eqref{lambda0}, we get
\begin{align} \label{endessous}
\left\langle (\Lh - \lar) u,u\right\rangle
& = \hspace{-.3cm}\sum_{\alpha \in \{\le,\ri\}} \hspace{-.2cm}\langle (\La - \lar) (\widetilde{\chi}_\alpha u - \Pi_\alpha \widetilde{\chi}_\alpha u) ,(\widetilde{\chi}_\alpha u- \Pi_\alpha \widetilde{\chi}_\alpha u)\rangle + {\cal O}(h^2) \|u\|^2 \nonumber\\
& \geq \sum_{\alpha \in \{\le,\ri\}} 2 \kappa { h} \left\| \widetilde{\chi}_\alpha u - \Pi_\alpha \widetilde{\chi}_\alpha u\right\|^2 + {\cal O}(h^{3/2}) \|u\|^2,
\end{align}
from \eqref{eq.lhkappa} and \eqref{eq.lhkappa2}.\\
Let us now check that there exists $c >0$ (uniform in $\eta$) such that
\begin{equation} \label{projpetit}
\left\|\Pi_\alpha \widetilde{\chi}_\alpha u\right\| = {\cal O}(\re^{-c/h}).
\end{equation}
For this we introduce new cut-off functions ${\widehat{\chi}}_\alpha$ such that $\widetilde{\chi}_\alpha \prec {\widehat{\chi}}_\alpha \prec \chi_\alpha$, that is to say $\supp\widetilde{\chi}_{\alpha} \subset\{\widehat{\chi}_{\alpha}\equiv1\}$ and $\supp\widehat{\chi}_{\alpha} \subset\{\chia\equiv1\}$. Thanks to the condition on the support, we have
$$
{\widehat{\chi}}_\alpha u \perp \fa.
$$
Since $\fa=\phia$ on the support of $\widetilde{	\chi}_{\alpha}$, we check that
\begin{align}
\left\| \Pi_\alpha \widetilde{\chi}_\alpha u \right\|
& = \left| \langle \widetilde{\chi}_\alpha u, \phia\rangle\right| = \left| \langle \widetilde{\chi}_\alpha u, \fa\rangle \right|
 = \left| \langle (\widetilde{\chi}_\alpha - {\widehat{\chi}}_\alpha) u, \fa\rangle\right| \nonumber\\
& \leq \left\| (\widetilde{\chi}_\alpha - {\widehat{\chi}}_\alpha) \fa\right\| \|u\| = {\cal O}(\re^{-c/h}) \|u\|, \label{eq.Pialpha}
\end{align}
thanks to Corollary \ref{csqagmon}. This gives \eqref{projpetit}. From \eqref{endessous} and \eqref{eq.Pialpha}, we infer
\begin{align*}
\langle (\Lh - \lar) u,u\rangle
& \geq \sum_{\alpha \in \{\le,\ri\}} 2 \kappa h \left\| \widetilde{\chi}_\alpha u\right\|^2 + {\cal O}(h^{3/2}) \|u\|^2 \\
& \geq   \kappa h \|u\|^2,
\end{align*}
for $h$ small enough. This gives \eqref{specintermediaire}. 
\item Now using again the first inequality in the preceding computation also gives
\begin{align*}
\langle \Lh u,u\rangle
& \geq \sum_{\alpha \in \{\le,\ri\}} 2 \kappa h \left| \widetilde{\chi}_\alpha u\right\|^2 + \lambda(h) \|u\|^2 +  {\cal O}(h^{3/2}) \|u\|^2 \\
& \geq 2   \kappa h \|u\|^2, 
\end{align*}
from \eqref{eq.lhkappa} and for $h$ small enough. From the min-max principle and since  $\{\fl, \fr\}$ is a free family, we get $\dim{\mathcal G} \leq 2$ and we deduce \eqref{dimG}.
\item Eventually using  Lemma~\ref{lemresta} \eqref{lemrestai}, we get \eqref{SpLh} and the proof is complete.
\end{enumerate}
\end{proof}

\subsection{Precised estimates about quasimodes and eigenfunctions}
In this section we give precise estimates of the quasimodes $\fa$ and their projections on the spectral subspaces $\ga = \Pi \fa$ where $\Pi$ denotes the projection on ${\mathcal G}$. Let us first estimate the difference between $\fa$ and $\ga$.
\begin{lemma} \label{vafa}
We have $\fa - \ga = {\widetilde{\cal O}}(\re^{-\sfS /h})$ in $\sL^2(\S1)$ and $\sH^1(\S1)$.
\end{lemma}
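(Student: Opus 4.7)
I would start by setting $w=\fa-\ga=(I-\Pi)\fa$, the component of $\fa$ orthogonal to the spectral subspace ${\mathcal G}$. Since ${\mathcal G}$ is $\Lh$-invariant (it is a spectral subspace), $\Pi$ commutes with $\Lh$, hence with $\Lh-\lar$. Therefore
\begin{equation*}
(\Lh-\lar)w=(I-\Pi)(\Lh-\lar)\fa=(I-\Pi)\resta.
\end{equation*}
Since $\Pi$ is an orthogonal projection, $\|(I-\Pi)\resta\|\leq \|\resta\|$, and Lemma \ref{lemresta} \eqref{lemrestai} gives $\|(\Lh-\lar)w\|_{\sL^2(\S1)}={\widetilde{\cal O}}(\re^{-\sfS/h})$.

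Next, since $w\in{\mathcal G}^\perp$, I would invoke Lemma \ref{lem.SpLh} \eqref{specintermediaire}: $\langle(\Lh-\lar)w,w\rangle\geq \kappa h \|w\|^2$. Combined with Cauchy--Schwarz, this yields
\begin{equation*}
\kappa h \|w\|^2\leq \|(\Lh-\lar)w\|\,\|w\|={\widetilde{\cal O}}(\re^{-\sfS/h})\,\|w\|,
\end{equation*}
so $\|w\|_{\sL^2(\S1)}\leq (\kappa h)^{-1}{\widetilde{\cal O}}(\re^{-\sfS/h})={\widetilde{\cal O}}(\re^{-\sfS/h})$. The polynomial factor $h^{-1}$ is absorbed into the ${\widetilde{\cal O}}$ notation (Notation \ref{notation} (iii)), since $h^{-1}=o(\re^{\varepsilon/h})$ for any $\varepsilon>0$. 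This establishes the $\sL^2(\S1)$ bound.

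For the $\sH^1(\S1)$ estimate, I would use the quadratic form of $\Lh=(hD_s+\xi_0)^2+V$:
\begin{equation*}
\|(hD_s+\xi_0)w\|^2+\int_{\S1}V|w|^2\dx s=\langle \Lh w,w\rangle=\lar\|w\|^2+\langle(\Lh-\lar)w,w\rangle.
\end{equation*}
Both terms on the right are bounded by ${\widetilde{\cal O}}(\re^{-2\sfS/h})$, using $\lar={\cal O}(h)$, $\|w\|^2={\widetilde{\cal O}}(\re^{-2\sfS/h})$, and Cauchy--Schwarz for the second. Hence $\|(hD_s+\xi_0)w\|={\widetilde{\cal O}}(\re^{-\sfS/h})$, and the triangle inequality combined with the $\sL^2$ bound on $w$ itself gives $\|hD_s w\|={\widetilde{\cal O}}(\re^{-\sfS/h})$, so that $\|D_s w\|={\widetilde{\cal O}}(\re^{-\sfS/h})$, again absorbing a factor of $h^{-1}$ into the tilde.

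The only delicate point, really just a bookkeeping check, is confirming that polynomial losses in $h$ do not degrade the exponential rate; this is automatic from the definition of ${\widetilde{\cal O}}$. All the heavy lifting --- the exponential smallness of $\resta$, the spectral gap above ${\mathcal G}$, and $\dim{\mathcal G}=2$ --- has already been done in Lemmas \ref{lemresta} and \ref{lem.SpLh}, so the present lemma is essentially a one-page application of the spectral theorem.
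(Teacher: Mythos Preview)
Your proof is correct and follows essentially the same route as the paper: bound $(\Lh-\lar)(\fa-\ga)$ by ${\widetilde{\cal O}}(\re^{-\sfS/h})$, then use the spectral gap on ${\mathcal G}^\perp$ from Lemma~\ref{lem.SpLh}~\eqref{specintermediaire} to get the $\sL^2$ estimate, and finally upgrade to $\sH^1$ via the quadratic form. The only cosmetic difference is that the paper bounds $(\Lh-\lar)(\fa-\ga)$ by splitting it as $(\Lh-\lar)\fa-(\Lh-\lar)\ga$ and invoking Lemma~\ref{lem.SpLh}~\eqref{SpLh} for the second piece, whereas you write it as $(I-\Pi)\resta$ via commutation of $\Pi$ with $\Lh$; your version is slightly cleaner but equivalent.
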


\begin{proof}
We write
$$
(\Lh-\lar)(\fa-\ga)  = (\Lh-\lar)\fa - (\Lh-\lar)\ga.
$$
The first term is ${\widetilde{\cal O}}(\re^{-\sfS /h})$ from Lemma \ref{lemresta} \eqref{lemrestai}. The second is ${\widetilde{\cal O}}(\re^{-\sfS /h})$ from the exponential localization in Lemma \ref{lem.SpLh} \eqref{SpLh}. We therefore get in $\sL^2(\S1)$
$$
(\Lh-\lar)(\fa-\ga) = {\widetilde{\cal O}}(\re^{-\sfS /h}).
$$
Since $\fa-\ga \in {\mathcal G}^\perp$, we can use Lemma \ref{lem.SpLh}  \eqref{specintermediaire} and the spectral theorem to conclude that
$$
\fa-\ga = {\widetilde{\cal O}}(\re^{-\sfS /h}) \qquad \mbox{\rm in } \sL^2(\S1).
$$
By using the two preceding estimates, we get the result in $\sH^1(\S1)$.
\end{proof}
The following obvious lemma will be convenient in the following.
\begin{lemma}\label{Py}
Let $(\sH,\langle\cdot,\cdot\rangle)$ be a Hilbert space and $\Pi\in\mathcal{L}(\sH)$ be an orthogonal projection. Then, for all $u,v\in\sH$, we have
$$\langle u,v \rangle=\langle \Pi u,\Pi v \rangle+\langle(\mathsf{Id}-\Pi) u,(\mathsf{Id}-\Pi)v \rangle.$$
\end{lemma}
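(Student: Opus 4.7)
The plan is simply to decompose both vectors using the projection and expand the inner product, then invoke orthogonality of the range and kernel of $\Pi$. Concretely, I would write
$$u = \Pi u + (\mathsf{Id}-\Pi)u, \qquad v = \Pi v + (\mathsf{Id}-\Pi)v,$$
substitute into $\langle u,v\rangle$, and use sesquilinearity to get four terms.

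The two diagonal terms are exactly the right-hand side of the claimed identity, so it remains to show that the two cross terms vanish. For this I would use the defining properties of an orthogonal projection, namely $\Pi^2=\Pi$ and $\Pi^*=\Pi$, which give
$$\langle \Pi u,(\mathsf{Id}-\Pi)v\rangle=\langle u,\Pi(\mathsf{Id}-\Pi)v\rangle=\langle u,(\Pi-\Pi^2)v\rangle=0,$$
and symmetrically $\langle (\mathsf{Id}-\Pi)u,\Pi v\rangle=0$. Adding the two surviving terms yields the announced formula.

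There is no real obstacle: the lemma is a direct algebraic consequence of $\Pi$ being self-adjoint and idempotent, which is precisely what characterizes an orthogonal projection in a Hilbert space. The only thing worth noting is that self-adjointness is essential — the statement would fail for a general (oblique) projection — but this is automatic here since we assumed $\Pi$ is an orthogonal projection.
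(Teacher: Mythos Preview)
Your proof is correct and is the standard argument; the paper itself gives no proof at all, stating only that the lemma is ``obvious'' and leaving it without further comment. Your decomposition via $\Pi^2=\Pi$ and $\Pi^*=\Pi$ is exactly the natural justification one would supply.
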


\begin{lemma} \label{ps}
Let us define  the matrix $\mathsf T=(\mathsf T_{\alpha,\beta})_{\alpha,\beta\in\{\le,\ri\}}$ with $\mathsf T_{\alpha,\beta} = \langle\fa,\fb\rangle $ if $\alpha \neq \beta$ and $0$ otherwise. Then $\mathsf T = {\widetilde{\cal O}}(\re^{-\sfS /h})$ and we have
\begin{enumerate}[{\rm(i)}]
\item\label{ps-ii} $\left(\langle\fa,\fb\rangle\right)_{\alpha,\beta\in\{\le,\ri\}} = \mathsf{Id} + \mathsf T + {\widetilde{\cal O}}(\re^{-2\sfS /h})$,
\item\label{ps-i} $\langle\ga,\gb\rangle = \langle\fa,\fb\rangle + {\widetilde{\cal O}}(\re^{-2\sfS /h})$,
\item\label{ps-iii} $\left(\langle\ga,\gb\rangle\right)_{\alpha,\beta\in\{\le,\ri\}} = \mathsf{Id} + \mathsf T + {\widetilde{\cal O}}(\re^{-2\sfS /h})$.
\end{enumerate}
\end{lemma}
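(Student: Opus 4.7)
My plan is to obtain all three items directly from the preceding preparatory lemmas, with Lemma \ref{Py} as the key algebraic bridge between the quasimodes $\fa$ and their projections $\ga$.

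For item \eqref{ps-ii}, there is nothing new to do: the diagonal entries $\langle\fa,\fa\rangle = 1+\widetilde{\cal O}(\re^{-2\sfS/h})$ and the off-diagonal entries $\langle\fa,\fb\rangle = \widetilde{\cal O}(\re^{-\sfS/h}) = \mathsf T_{\alpha,\beta}$ (for $\alpha\neq\beta$) were already established in Lemma~\ref{lemresta} \eqref{lemrestaii}. Assembling these four scalars into the $2\times 2$ Gram matrix yields the claimed identity, with the diagonal remainders absorbed in $\widetilde{\cal O}(\re^{-2\sfS/h})$ and the off-diagonal entries being exactly $\mathsf T$.

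For item \eqref{ps-i}, the plan is to apply Lemma~\ref{Py} with $\sH = \sL^2(\S1)$ and $\Pi$ the orthogonal projection onto ${\mathcal G}$ (so that $\ga = \Pi\fa$ by definition). This gives
\begin{equation*}
\langle\fa,\fb\rangle = \langle\ga,\gb\rangle + \langle(\mathsf{Id}-\Pi)\fa,(\mathsf{Id}-\Pi)\fb\rangle.
\end{equation*}
Since $(\mathsf{Id}-\Pi)\fa = \fa - \ga$ and similarly for $\beta$, Lemma~\ref{vafa} yields $\|(\mathsf{Id}-\Pi)\fa\|_{\sL^2} = \widetilde{\cal O}(\re^{-\sfS/h})$, so Cauchy--Schwarz gives the error bound $\widetilde{\cal O}(\re^{-2\sfS/h})$ for the second inner product, from which \eqref{ps-i} follows.

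For item \eqref{ps-iii}, I simply combine \eqref{ps-ii} with \eqref{ps-i} entry by entry, noting that each remainder is $\widetilde{\cal O}(\re^{-2\sfS/h})$. The initial assertion $\mathsf T = \widetilde{\cal O}(\re^{-\sfS/h})$ is immediate from Lemma~\ref{lemresta} \eqref{lemrestaii} since the only nonzero entries of $\mathsf T$ are $\langle\fr,\fl\rangle$ and $\langle\fl,\fr\rangle$. There is no real obstacle here; the only point requiring a little care is the clean use of Lemma~\ref{Py} to convert the $\widetilde{\cal O}(\re^{-\sfS/h})$ localization of $\fa-\ga$ into a quadratic improvement $\widetilde{\cal O}(\re^{-2\sfS/h})$ at the level of inner products, which is the whole reason the lemma is stated in this symmetric form.
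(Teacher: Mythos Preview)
Your proof is correct and follows exactly the paper's approach: item \eqref{ps-ii} and the estimate on $\mathsf T$ come from Lemma~\ref{lemresta}~\eqref{lemrestaii}, item \eqref{ps-i} from Lemma~\ref{Py} combined with Lemma~\ref{vafa}, and item \eqref{ps-iii} from the two preceding items. The paper's proof is essentially a terse summary of what you wrote out in detail.
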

\begin{proof}
The fact that $\mathsf T  = {\widetilde{\cal O}}(\re^{-\sfS /h})$ and \eqref{ps-ii} follow from Lemma \ref{lemresta} \eqref{lemrestaii}.
\eqref{ps-i} is a consequence of Lemma \ref{Py} and Lemma \ref{vafa}. 
\eqref{ps-iii} is then obvious.
\end{proof}

\subsection{Interaction matrix}
From Lemma \ref{ps} \eqref{ps-iii}, the basis $(\gl, \gr)$ is quasi orthonormal but not exactly orthonormal.  Therefore we introduce the new basis $\mathsf{g} = g \mathsf G^{-1/2}$, where $\mathsf G$ is the Gram-Schmidt matrix $(\langle\ga, \gb\rangle)_{\alpha,\beta\in\{\le,\ri\}}$ and $g$ the row vector $(\gl,\gr)$. The basis $\mathsf{g}$ is orthonormal since
$$
(\langle\gna, \gnb\rangle)_{\alpha,\beta\in\{\le,\ri\}} = {^t}{\mathsf G}^{-1/2} (\langle \ga, \gb\rangle)_{\alpha,\beta\in\{\le,\ri\}}\mathsf G^{-1/2} 
=\mathsf G^{-1/2} \mathsf G \mathsf G^{-1/2} =\mathsf{Id}.
$$

\begin{proposition}\label{prop.interaction}
The matrix $\mathsf M$ of the restriction to $\Lh$ in the basis $\mathsf{g}$ is given by
$$
\mathsf M := \left(\langle\Lh\mathsf{g}_\alpha,\mathsf{g}_\beta\rangle\right)_{\alpha,\beta\in\{\le,\ri\}}= \mathsf D + \mathsf W + {\widetilde{\cal O}}(\re^{-2\sfS /h}),
$$
where
\begin{enumerate}[(a)]
\item\label{prop.interactioni}  $\mathsf D = \lar \mathsf{Id}$,
\item\label{prop.interactionii} the \enquote{interaction matrix}
$\mathsf W=(w_{\alpha,\beta}(h))_{\alpha,\beta\in\{\le,\ri\}}$ is defined, recalling \eqref{defresta}, by
$$
w_{\alpha,\beta}(h) = \langle\resta, \fb\rangle \quad \mbox{ if }\alpha \neq \beta,\qquad\mbox{ and }\quad0\mbox{ otherwise}.
$$
\end{enumerate}
In particular, the gap between the two first eigenvalues, denoted by $\lambda_{1}(h)$ and $\lambda_{2}(h)$, of $\Lh$ (or of $M$) satisfies
\begin{equation} \label{splitth}
\lambda_2(h)-\lambda_1(h) = 2 |w_{\le,\ri}(h)|+ {\widetilde{\cal O}}(\re^{-2\sfS /h}).
\end{equation}
\end{proposition}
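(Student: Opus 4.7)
The plan is to compute the entries $\langle \Lh\gna,\gnb\rangle$ by unwinding the construction $\mathsf g = g\mathsf G^{-1/2}$, exploiting that the spectral projector $\Pi$ onto ${\mathcal G}$ commutes with $\Lh$ and that $(\Lh-\lar)\fa=\resta$ by \eqref{defresta}. First, since $\ga=\Pi\fa$, I would write
$$
\Lh\ga = \Pi\Lh\fa = \Pi(\lar\fa+\resta) = \lar\ga+\Pi\resta.
$$
Taking scalar product with $\gb\in{\mathcal G}$ yields $\langle\Lh\ga,\gb\rangle = \lar\langle\ga,\gb\rangle + \langle\resta,\gb\rangle$. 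Then I would use Lemma \ref{vafa} together with $\resta = \widetilde{\mathcal O}(\re^{-\sfS/h})$ from Lemma \ref{lemresta} to replace $\gb$ by $\fb$ modulo $\widetilde{\mathcal O}(\re^{-2\sfS/h})$, leaving
$$
\langle\Lh\ga,\gb\rangle = \lar\langle\ga,\gb\rangle + \langle\resta,\fb\rangle + \widetilde{\mathcal O}(\re^{-2\sfS/h}).
$$

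At the matrix level, setting $\mathsf N = (\langle\Lh\ga,\gb\rangle)_{\alpha,\beta}$ and $\mathsf R = (\langle\resta,\fb\rangle)_{\alpha,\beta}$, the preceding identity reads $\mathsf N = \lar\mathsf G + \mathsf R + \widetilde{\mathcal O}(\re^{-2\sfS/h})$. By Lemma \ref{lemresta}, the diagonal of $\mathsf R$ is $\widetilde{\mathcal O}(\re^{-2\sfS/h})$ while the off-diagonal entries are exactly $w_{\alpha,\beta}(h)$, so $\mathsf R = \mathsf W + \widetilde{\mathcal O}(\re^{-2\sfS/h})$. To finish, I would conjugate: $\mathsf M = \mathsf G^{-1/2}\mathsf N\mathsf G^{-1/2}$. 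Lemma \ref{ps} gives $\mathsf G = \mathsf{Id}+\mathsf T + \widetilde{\mathcal O}(\re^{-2\sfS/h})$ with $\mathsf T = \widetilde{\mathcal O}(\re^{-\sfS/h})$, hence $\mathsf G^{-1/2} = \mathsf{Id}-\tfrac12\mathsf T + \widetilde{\mathcal O}(\re^{-2\sfS/h})$ by Taylor expansion. Since $\mathsf W = \widetilde{\mathcal O}(\re^{-\sfS/h})$, the cross terms $\mathsf T\mathsf W$ and $\mathsf W\mathsf T$ are $\widetilde{\mathcal O}(\re^{-2\sfS/h})$, and the announced identity
$$
\mathsf M = \lar\mathsf{Id} + \mathsf W + \widetilde{\mathcal O}(\re^{-2\sfS/h})
$$
follows.

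For the splitting estimate \eqref{splitth}, I would observe that $\mathsf M$ is hermitian (because $\Lh$ is self-adjoint) and that $\mathsf W$ has zero diagonal with $w_{\le,\ri} = \overline{w_{\ri,\le}}$. The eigenvalues of $\lar\mathsf{Id}+\mathsf W$ are therefore $\lar\pm|w_{\le,\ri}(h)|$, and Weyl's perturbation inequality for hermitian matrices transfers the error $\widetilde{\mathcal O}(\re^{-2\sfS/h})$ to the eigenvalues of $\mathsf M$. Since $\mathsf M$ is the matrix of the restriction of $\Lh$ to ${\mathcal G}$, and since ${\mathcal G}$ carries precisely the two eigenvalues $\lambda_1(h)$ and $\lambda_2(h)$ by Lemma \ref{lem.SpLh}, \eqref{splitth} is obtained.

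The main obstacle is bookkeeping: one must check that every step (the Gram--Schmidt conjugation by $\mathsf G^{-1/2}$, the replacement of $\ga-\fa$ via Lemma \ref{vafa}, and the removal of the projector $\Pi$ in front of $\resta$) produces a remainder that is genuinely $\widetilde{\mathcal O}(\re^{-2\sfS/h})$ and not merely $\widetilde{\mathcal O}(\re^{-\sfS/h})$. This closes precisely because both $\mathsf W$ and $\mathsf T$ are of size $\widetilde{\mathcal O}(\re^{-\sfS/h})$, so their products or cross-interactions with comparable quantities gain a full extra factor $\re^{-\sfS/h}$.
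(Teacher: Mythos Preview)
Your argument is correct and follows the same overall scheme as the paper (conjugate by $\mathsf G^{-1/2}$, expand, and use the ${\widetilde{\cal O}}(\re^{-\sfS/h})$ size of $\mathsf T$ and $\mathsf W$ to absorb cross-terms), but the bookkeeping is organized slightly differently. The paper first passes from $\langle\Lh\ga,\gb\rangle$ back to $\langle\Lh\fa,\fb\rangle$ via the Pythagorean identity of Lemma~\ref{Py} (which requires Lemma~\ref{vafa} in $\sH^1$), obtains $(\langle\Lh\fa,\fb\rangle)=\mathsf D+\mathsf D\mathsf T+\mathsf W+{\widetilde{\cal O}}(\re^{-2\sfS/h})$, and must then check that the extra $\mathsf D\mathsf T$ cancels against $-\tfrac12\mathsf T\mathsf D-\tfrac12\mathsf D\mathsf T$ coming from the expansion of $\mathsf G^{-1/2}$, using that $\mathsf D=\lar\,\mathsf{Id}$ commutes with $\mathsf T$. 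Your route---using $[\Pi,\Lh]=0$ to get $\langle\Lh\ga,\gb\rangle=\lar\langle\ga,\gb\rangle+\langle\resta,\gb\rangle$---keeps the $\lar$-contribution packaged as $\lar\mathsf G$, so that $\mathsf G^{-1/2}(\lar\mathsf G)\mathsf G^{-1/2}=\lar\,\mathsf{Id}$ is immediate and no cancellation has to be tracked; it also only needs Lemma~\ref{vafa} in $\sL^2$. Regarding the Hermiticity of $\mathsf W$ (needed for the eigenvalue formula $\lar\pm|w_{\le,\ri}|$), the paper establishes $w_{\le,\ri}=\overline{w_{\ri,\le}}$ through the anti-unitary symmetry $U$; as you implicitly use, it also follows directly from $\langle(\Lh-\lar)\fa,\fb\rangle=\overline{\langle(\Lh-\lar)\fb,\fa\rangle}$ since $\fa,\fb\in C^\infty(\S1)$ and $\Lh$ is self-adjoint.
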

For the proof of Proposition \ref{prop.interaction} we begin by two lemmas. First, we notice that $\mathsf W$ is indeed an Hermitian matrix by using the symmetries of our constructions.
\begin{lemma}
The matrix $\mathsf W$ is Hermitian.
\end{lemma}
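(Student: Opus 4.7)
The matrix $\mathsf W$ has zero diagonal, so being Hermitian reduces to the single identity $w_{\le,\ri}(h)=\overline{w_{\ri,\le}(h)}$. My plan is to deduce this from the antiunitary involution $U$ of \eqref{U} that exchanges the two wells, exploiting the intertwining relations already established in Section~\ref{Sec.simplewell}.

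First I would check that $U$ commutes with $\Lh$. Since $U$ is antilinear with $U^{-1}=U$, a direct computation gives $U\partial_s U^{-1}=-\partial_s$; combined with $\overline{-i}=i$, this yields $U(hD_s)U^{-1}=hD_s$. Because $\xi_{0}\in\R$ and $V(\pi-s)=V(s)$ by Assumption~\ref{V}, we also get $U(hD_s+\xi_{0})U^{-1}=hD_s+\xi_{0}$ and $UVU^{-1}=V$, so that $U\Lh U^{-1}=\Lh$. Since $\lar$ is real, $U$ then commutes with $\Lh-\lar$.

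Next, from \eqref{def.phil}, \eqref{def.chil} and \eqref{extension} one already has $\fl=U\fr$, hence
$$r_{h,\le}=(\Lh-\lar)\fl=U(\Lh-\lar)\fr=U r_{h,\ri}.$$
Combining this with the antiunitarity identity $\langle Ua,b\rangle=\overline{\langle a,Ub\rangle}$ (valid since $U^{-1}=U$), I obtain
$$w_{\le,\ri}(h)=\langle r_{h,\le},\fr\rangle=\langle U r_{h,\ri},\fr\rangle=\overline{\langle r_{h,\ri},U\fr\rangle}=\overline{\langle r_{h,\ri},\fl\rangle}=\overline{w_{\ri,\le}(h)},$$
which is exactly the Hermitian property.

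The only genuinely delicate point is the intertwining $U\Lh U^{-1}=\Lh$: one must verify that the antilinearity of $U$ (which conjugates $-i$ into $i$) compensates exactly the sign flip $\partial_s\mapsto-\partial_s$ coming from the reflection $s\mapsto\pi-s$, so that $hD_s$, and therefore $hD_s+\xi_{0}$, is preserved; once this is in place, the symmetry of $V$ handles the potential part and the rest is automatic.
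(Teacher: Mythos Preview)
Your argument is correct and follows essentially the same route as the paper: both proofs rest on the antiunitary involution $U$ that exchanges the two wells and on the identities $\fl=U\fr$, $r_{h,\le}=Ur_{h,\ri}$, together with $\langle Ua,b\rangle=\overline{\langle a,Ub\rangle}$. The only organizational difference is that you invoke the global intertwining $U\Lh U^{-1}=\Lh$ on $\sL^2(\S1)$ and the form $r_{h,\alpha}=(\Lh-\lar)f_{h,\alpha}$ directly, whereas the paper works through the local relation $\Ll=U\Lr U^{-1}$ and the commutator expression $[\La,\chia]\phia$; the content is the same.
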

\begin{proof}
By definition, we have $w_{\alpha,\alpha}(h) = 0$ for $\alpha \in \{\ri, \le\}$ and
$$w_{\le,\ri}(h)=\left\langle [\Ll, \chil] \phil,\chir\phir\right\rangle.$$
By using \eqref{eq.conj-op}, \eqref{def.phil} and \eqref{def.chil}, we deduce that
\begin{align*}
w_{\le,\ri}(h)&=\left\langle [U\Lr U^{-1},U \chir] U\phir,U^{-1}\left(\chil\phil\right)\right\rangle\\
& = \left\langle  U\Lr U^{-1} (U \chir U\phir) -  U \chir U\Lr U^{-1} (U\phir),U^{-1}\left(\chil\phil\right)\right\rangle\\
& = \left\langle  U\Lr (\chir \phir) -  U \chir U\Lr (\phir),U^{-1}\left(\chil\phil\right)\right\rangle\\
& = \left\langle  U\left(\Lr (\chir \phir) -  \chir \Lr (\phir)\right),U^{-1}\left(\chil\phil\right)\right\rangle\\
&=\left\langle U\left([\Lr , \chir] \phir\right),U^{-1}\left(\chil\phil\right)\right\rangle\\
&=\overline{\left\langle [\Lr , \chir] \phir,\chil\phil\right\rangle}  = \overline{w_{\ri,\le}(h)},
\end{align*}
since $U$ is anti-hermitian. 
\end{proof}
Then, we write the matrix of $\Lh$ in the quasi orthonormal basis $g$.
\begin{lemma} \label{lhps}
We have
\begin{enumerate}[\rm (i)]
\item\label{tintin} $\langle\Lh\ga,\gb\rangle = \langle\Lh\fa,\fb\rangle + {\widetilde{\cal O}}(\re^{-2\sfS /h})$,
\item\label{tintin2} $\left(\langle\Lh \fa,\fb\rangle\right)_{\alpha,\beta\in\{\le,\ri\}} = \mathsf D + \mathsf D\mathsf T  + \Ws + {\widetilde{\cal O}}(\re^{-2\sfS /h})$,
\item\label{tintin3} $\left(\langle\Lh\ga,\gb\rangle\right)_{\alpha,\beta\in\{\le,\ri\}} = \mathsf D + \mathsf D\mathsf T  + \Ws + {\widetilde{\cal O}}(\re^{-2\sfS /h})$.
\end{enumerate}
\end{lemma}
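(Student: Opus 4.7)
The plan is to prove the three items in order, with (iii) following immediately from (i) and (ii). The essential tools are the orthogonal decomposition of Lemma~\ref{Py}, the exponential closeness $\fa - \ga = {\widetilde{\cal O}}(\re^{-\sfS/h})$ in $\sL^2(\S1)$ from Lemma~\ref{vafa}, the identity $\Lh \fa = \lar \fa + \resta$ from \eqref{defresta}, the rough bound $\lar = \mathcal{O}(h)$ from \eqref{eq.lhkappa}, and the estimates of Lemma~\ref{lemresta}. I will also crucially use that $\mathcal{G}=\mathrm{range}(\mathds{1}_{\Ih}(\Lh))$ is $\Lh$-invariant, so that the orthogonal projector $\Pi$ on $\mathcal{G}$ commutes with $\Lh$ when applied to smooth functions such as $\fa$.

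For (i), I would apply Lemma~\ref{Py} with $u = \Lh \fa$ and $v = \fb$ to write
\[
\langle \Lh \fa, \fb\rangle = \langle \Pi \Lh \fa, \Pi \fb\rangle + \langle (\mathsf{Id}-\Pi)\Lh \fa,(\mathsf{Id}-\Pi) \fb\rangle = \langle \Lh \ga, \gb\rangle + \langle (\mathsf{Id}-\Pi)\Lh \fa, \fb-\gb\rangle,
\]
where I used $\Pi \Lh \fa = \Lh \Pi \fa = \Lh \ga$ and $(\mathsf{Id}-\Pi)\fb = \fb-\gb$. For the remainder I expand $(\mathsf{Id}-\Pi)\Lh \fa = \lar(\fa - \ga) + (\mathsf{Id}-\Pi)\resta$; both summands are ${\widetilde{\cal O}}(\re^{-\sfS/h})$ in $\sL^2(\S1)$, the first by Lemma~\ref{vafa} together with $\lar = \mathcal{O}(h)$, the second by Lemma~\ref{lemresta}(\ref{lemrestai}) since $\mathsf{Id}-\Pi$ has norm at most one. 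Combined with $\fb-\gb = {\widetilde{\cal O}}(\re^{-\sfS/h})$, Cauchy--Schwarz yields the announced ${\widetilde{\cal O}}(\re^{-2\sfS/h})$ bound.

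For (ii), I would simply expand $\langle \Lh \fa, \fb\rangle = \lar\langle \fa,\fb\rangle + \langle \resta,\fb\rangle$ and sort by diagonal versus off-diagonal entries. On the diagonal, Lemma~\ref{lemresta}(\ref{lemrestaii})--(\ref{lemrestaibis}) give $\lar\langle\fa,\fa\rangle = \lar + {\widetilde{\cal O}}(\re^{-2\sfS/h})$ and $\langle \resta,\fa\rangle = {\widetilde{\cal O}}(\re^{-2\sfS/h})$, contributing exactly $\mathsf D_{\alpha,\alpha}$ modulo the remainder (and recall $\mathsf T_{\alpha,\alpha} = \Ws_{\alpha,\alpha} = 0$). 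Off the diagonal, $\lar\langle\fa,\fb\rangle = (\mathsf D\mathsf T)_{\alpha,\beta}$ by the very definitions of $\mathsf D$ and $\mathsf T$, while $\langle \resta,\fb\rangle = w_{\alpha,\beta}(h) = \Ws_{\alpha,\beta}$ by the definition of the interaction matrix. This is (ii); assembling (i) and (ii) gives (iii). The whole argument is essentially bookkeeping once $[\Pi,\Lh]=0$ is invoked; the only point requiring a moment of attention is noting in (i) that the prefactor $\lar = \mathcal{O}(h)$ in $\lar(\fa - \ga)$ is harmlessly absorbed by the $\widetilde{\cal O}$ notation, so I foresee no genuine obstacle.
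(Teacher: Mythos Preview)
Your proposal is correct and follows the paper's approach; parts (ii) and (iii) are essentially verbatim. The only minor difference is in (i): the paper writes the remainder as $\langle \Lh(\fa-\ga), \fb-\gb\rangle$ and bounds it via the quadratic form of $\Lh$ using the $\sH^1$ estimate of Lemma~\ref{vafa}, whereas you expand $(\mathsf{Id}-\Pi)\Lh\fa$ using $\Lh\fa = \lar\fa + \resta$ and get by with $\sL^2$ bounds alone---an equally valid shortcut.
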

\begin{proof} 
\begin{enumerate}[\rm (i)] 
\item With Lemma \ref{Py}, we get
$$ \langle\Lh\fa,\fb\rangle - \langle\Lh\ga,\gb\rangle = \langle\Lh(\fa- \ga),\fb-\gb\rangle.$$
From Lemma \ref{vafa} applied in $\sH^1$, we get directly that
$$
\langle\Lh\ga,\gb\rangle - \langle\Lh\fa,\fb\rangle = {\widetilde{\cal O}}(\re^{-2\sfS /h}).
$$
\item We can write
$$
\langle \Lh\fa,\fb\rangle = \lar \langle\fa,\fb\rangle + \langle\resta,\fb\rangle.
$$
The result follows from the definition of $\mathsf D$, $\mathsf W$, Lemma~\ref{lemresta} \eqref{lemrestaibis} and Lemma \ref{ps} \eqref{ps-ii}.
\item This is a direct consequence of \eqref{tintin} and \eqref{tintin2}.
\end{enumerate}
\end{proof}

\begin{proofof}{Proposition \ref{prop.interaction}}
Since $\mathsf{g} = g \mathsf G^{-1/2}$, we directly get 
$$
\mathsf M = \mathsf G^{-1/2} (\langle\Lh \ga, \gb\rangle)_{\alpha,\beta\in\{\le,\ri\}} \mathsf G^{-1/2}.
$$
Recall that Lemma \ref{ps} \eqref{ps-iii} gives $\mathsf G = \mathsf{Id} + \mathsf T + {\widetilde{\cal O}}(\re^{-2\sfS /h})$. Using Lemma \ref{lhps} \eqref{tintin3}, we get  
\begin{equation*}
\begin{split}
\mathsf M 
& = \big(\mathsf{Id} + \mathsf T + {\widetilde{\cal O}}(\re^{-2\sfS /h})\big)^{-1/2} \big( \mathsf D + \mathsf D\mathsf T  + \Ws + {\widetilde{\cal O}}(\re^{-2\sfS /h})\big) \big(\mathsf{Id} + \mathsf T + {\widetilde{\cal O}}(\re^{-2\sfS /h})\big)^{-1/2} \\
& = \big(\mathsf{Id} -  \tfrac{1}{2} \mathsf T + {\widetilde{\cal O}}(\re^{-2\sfS /h})\big) \big(\mathsf D + \mathsf D\mathsf T  + \Ws + {\widetilde{\cal O}}(\re^{-2\sfS /h})\big) \big(\mathsf{Id} -  \tfrac{1}{2} \mathsf T + {\widetilde{\cal O}}(\re^{-2\sfS /h})\big) \\
& = \mathsf D + \mathsf D\mathsf T + \mathsf W - \tfrac{1}{2}\mathsf  T \mathsf D - \tfrac{1}{2} \mathsf D \mathsf T + {\widetilde{\cal O}}(\re^{-2\sfS /h}) \\
& = \mathsf D + \mathsf W + {\widetilde{\cal O}}(\re^{-2\sfS /h}), 
\end{split}
\end{equation*}
where we used that $\mathsf W = {\widetilde{\cal O}}(\re^{-\sfS /h})$ from Lemma~\ref{lemresta} \eqref{lemrestaibis}, $\mathsf T = {\widetilde{\cal O}}(\re^{-\sfS /h})$ from Lemma \ref{ps}, and that $\mathsf D$ and $\mathsf T$ commute by definition of $\mathsf D$. The spectrum of the $2\times2$ matrix $\mathsf D+\mathsf W$ is explicit and we deduce \eqref{splitth}. This completes the proof of Proposition \ref{prop.interaction}.
\end{proofof}

\section{Computation of the interaction\label{Sec.Comput}}
This section is devoted to computation of $w_{\le,\ri}(h)$ introduced in Proposition \ref{prop.interaction} and to the proof of Theorem \ref{th.gap}.
\subsection{Expression of the interaction coefficient}
First, we notice that using  \eqref{phil} and the $2\pi$-periodic extensions (see Notation \ref{notation}), the function $\phil$ writes on $(-\pi,\pi)$
\begin{equation}\label{phil-morceaux}
\phil(s) = \begin{cases}
 \re^{i\frac{\xi_{0} \pi}h} \re^{-i\frac{\xi_{0} s}h} \tphil(s),& \forall s\in (\eta,\pi),\\
\re^{-i\frac{\xi_{0} \pi}h}\re^{-i\frac{\xi_{0} s}h} \tphil(s),& \forall s\in (-\pi,-\eta),\\
0,&\forall s\in [-\eta,\eta].
\end{cases}
\end{equation}
By integration by parts, we have 
\begin{eqnarray*}
w_{\le,\ri}(h)
&=& -h^2\int_{\S1}\chil''\phil \overline{\phir} \dx s +\frac{2h}i\int_{\S1}\chil'(hD_{s}+\xi_{0})\phil\ \overline{\phir} \dx s\\
&=& h^2\int_{\S1}\chil'\left(\phil \overline{\phir}'-\phil' \overline{\phir}\right) \dx s
 +\frac{2h\xi_{0}}i \int_{\S1}\chil' \phil\ \overline{\phir} \dx s\\
&=& -ih\int_{\S1}\chil'\left( \phil\ \overline{(hD_{s}+\xi_{0})\phir}+(hD_{s}+\xi_{0})\phil\ \overline{\phir}\right) \dx s\\
&=& w_{\le,\ri}^{\mathsf u}+w_{\le,\ri}^{\mathsf d},
\end{eqnarray*}
with
\begin{align*}
w_{\le,\ri}^{\mathsf u}
&= -ih \int_{0}^\pi \chil'\left( \phil\ \overline{(hD_{s}+\xi_{0})\phir}+(hD_{s}+\xi_{0})\phil\ \overline{\phir}\right) \dx s\\
&=h^2\re^{i\frac{\xi_{0} \pi}h} \int_{0}^\pi \chil'\ \mathsf{Wronsk} \dx s,\\
w_{\le,\ri}^{\mathsf d}
&= -ih \int_{-\pi}^0 \chil'\left( \phil\ \overline{(hD_{s}+\xi_{0})\phir}+(hD_{s}+\xi_{0})\phil\ \overline{\phir}\right) \dx s\\
&= h^2\re^{-i\frac{\xi_{0} \pi}h} \int_{-\pi}^0 \chil'\ \mathsf{Wronsk} \dx s,
\end{align*}
where we have used \eqref{def.phir}, \eqref{phil-morceaux}, the fact that $\tphir$ and $\tphil$ are real valued and the notation
$$ \mathsf{Wronsk} = \tphil\ \tphir' -\tphil'\ \tphir.$$
Note that $\mathsf{Wronsk}$ is defined and constant on each of the two connected components of the support of $\chil'$, respectively included in
$(\eta,2\eta)$ and  $(-2\eta,-\eta)$ (modulo $2\pi$).
Also note that  
$$
\int_{0}^\pi\chil' \dx s =\int_{\eta}^{2\eta}\chil'  \dx s= \chil(2\eta) - \chil(\eta) = 1,
$$
according to the definition of $\chil$. Thus, since $\tphil=U\tphir$ and the functions are real valued, we can write
\begin{eqnarray*}
\mathsf{Wronsk}(s)
= \tphil \left(\frac\pi2\right)\ \tphir'\left(\frac\pi2\right) -\tphil'\left(\frac\pi2\right)\ \tphir\left(\frac\pi2\right)
= 2\tphir \left(\frac\pi2\right)\ \tphir'\left(\frac\pi2\right),\quad\forall s\in(0,\pi).
\end{eqnarray*}
In the same way,
$$\mathsf{Wronsk}(s)
= 2\tphir \left(-\frac\pi2\right)\ \tphir'\left(-\frac\pi2\right),\qquad\forall s\in(-\pi,0).$$
Consequently
\begin{equation} \label{sanssymetrie}
w_{\le,\ri}(h)
= 2h^2\left(\re^{i\frac{\xi_{0} \pi}h} \tphir \left(\frac\pi2\right)\ \tphir'\left(\frac\pi2\right)
-\re^{-i\frac{\xi_{0} \pi}h} \tphir \left(-\frac\pi2\right)\ \tphir'\left(-\frac\pi2\right)\right).
\end{equation}
In particular, if the potential $V$ is even so is $\tphir$ (whereas $\tphir'$ is odd) and we get
\begin{equation} \label{avecsymetrie}
w_{\le,\ri}(h)
= 4h^2\cos\left(\frac{\xi_{0} \pi}h\right)\ \tphir\left(\frac\pi2\right)\ \tphir'\left(\frac\pi2\right).
\end{equation}

\subsection{Proof of Theorems~\ref{th.gap} and \ref{th.gapsym}}
One of the consequence of Proposition \ref{uniform} (see also Remark \ref{minuit}) is that for any  compact  $K\subset \Ier$ and $N >0$, 
$$
\tphir = \BKWr + h^N {\cal O}( \re^{-\Phir/h}),
$$
in $\sL^\infty(K)$ and $\sW^{1,\infty}(K)$. Using the unitary transform $U$, we have
$$
2\Phir(\tfrac\pi2) =\sfS_{\up}\geq \sfS\qquad\mbox{ and }\qquad 2\Phir(-\tfrac\pi2) = \sfS_{\dow}\geq \sfS .
$$
Using \eqref{sanssymetrie}, this allows to write for all $N>0$
\begin{equation} \label{wlr1}
w_{\le,\ri}(h)
= 2h^2\left(\re^{i\frac{\xi_{0} \pi}h} \BKWr \left(\frac\pi2\right)\ \BKWr'\left(\frac\pi2\right)
-\re^{-i\frac{\xi_{0} \pi}h} \BKWr \left(-\frac\pi2\right)\ \BKWr'\left(-\frac\pi2\right)\right) + h^{N} {\cal O}(\re^{-\sfS /h}).
\end{equation}
We now use Lemma \ref{lem.QM} for computing this coefficient. We first write that
\begin{equation} \label{psipi2}
\BKWr \left(\frac\pi2\right) = h^{-1/4} \left( \frac{\kappa}{\pi}\right)^{1/4} \sqrt{\sfA_{\up}} \re^{-\sfS_{\up}/2h} (1+{\cal O}(h)),
\end{equation}
with
$$
\sfA_{\up} = \exp\left( - \int_{[0,\frac\pi2]} \frac{\partial_\sigma {\sqrt{V}} - \kappa}{\sqrt{V}} d\sigma\right),
$$
and
\begin{equation} \label{psiprimepi2}
\BKWr' \left(\frac\pi2\right) = h^{-5/4} \left( \frac{\kappa}{\pi}\right)^{1/4} \sqrt{\sfA_{\up}}\, \Phir'\left(\frac\pi2\right) \re^{-\sfS_{\up}/2h} (1+{\cal O}(h)).
\end{equation}
A similar expression is available for $\BKWl$ and its derivative at $-\pi/2$, with in particular
$$
\sfA_{\dow} = \exp\left( \int_{[-\frac\pi2, 0]} \frac{\partial_\sigma {\sqrt{V}} + \kappa}{\sqrt{V}} d\sigma\right).
$$
We take $N=2$ and use \eqref{wlr1}, \eqref{psipi2}, \eqref{psiprimepi2} and the fact that 
$$
\Phir'\left(\frac\pi2\right)=\sqrt{V\left(\frac\pi2\right)}
\qquad \mbox{ and }\qquad\Phir'\left(-\frac\pi2\right)=-\sqrt{V\left(-\frac\pi2\right)},
$$
to get 
\begin{equation*} \label{48}
w_{\le,\ri}(h)
= 2h^{1/2}\sqrt{ \frac{\kappa}{\pi}} \left(\re^{i\frac{\xi_{0} \pi}h}\sfA_{\up} \sqrt{V\left(\frac\pi2\right)} \re^{-\sfS_{\up}/h}
+\re^{-i\frac{\xi_{0} \pi}h}\sfA_{\dow} \sqrt{V\left(-\frac\pi2\right)} \re^{-\sfS_{\dow}/h} \right) + h^{3/2} {\cal O}(\re^{-\sfS /h}).
\end{equation*}
To deduce Theorem \ref{th.gap}, we use now splitting formula \eqref{splitth} in Proposition \ref{prop.interaction} and have to control the remainder. 
This can be done by taking $\varepsilon$ and $\eta$ small enough (see Notation \ref{notation}) so that ${\widetilde{\cal O}}(\re^{-2\sfS/h})=h^{3/2}{\cal O}(\re^{-\sfS/h})$.\\
Theorem~\ref{th.gapsym} is a direct consequence of Theorem~\ref{th.gap}.

\paragraph{Acknowledgments.}
This work was partially supported by the ANR (Agence Nationale de la Recherche), project {\sc Nosevol} n$^{\rm o}$ ANR-11-BS01-0019 and by the Centre Henri Lebesgue (program \enquote{Investissements d'avenir} -- n$^{\rm o}$ ANR-11-LABX-0020-01).

\def\cprime{$'$}

\end{document}